\newtheorem{neu}{}[section]
\newtheorem{Cor}[neu]{Corollary}
\newtheorem*{Cor*}{Corollary}
\newtheorem{Thm}[neu]{Theorem}
\newtheorem*{Thm*}{Theorem}
\newtheorem{Prop}[neu]{Proposition}
\newtheorem*{Prop*}{Proposition}
\theoremstyle{definition}
\newtheorem{Lemma}[neu]{Lemma}
\newtheorem*{Rmk*}{Remark}
\newtheorem{Rmk}[neu]{Remark}
\newtheorem*{Ex*}{Example}
\newtheorem*{Qu*}{Question}
\newtheorem{Def}[neu]{Definition}
\newcommand{\N}{\mathbb{N}}
\newcommand{\Z}{\mathbb{Z}}
\newcommand{\R}{\mathbb{R}}
\newcommand{\C}{\mathbb{C}}
\newcommand{\CP}{\C\mathrm{P}}
\newcommand{\om}{\omega}
\newcommand{\Om}{\Omega}
\newcommand{\A}{\mathcal{A}}
\newcommand{\M}{\mathcal{M}}
\newcommand{\Ma}{\mathfrak{M}}
\renewcommand{\H}{\mathrm{H}}
\newcommand{\CM}{\mathrm{CM}}
\newcommand{\Crit}{\mathrm{Crit}}
\newcommand{\CL}{\operatorname{cuplength}}
\newcommand{\beq}{\begin{equation}}
\newcommand{\beqn}{\begin{equation}\nonumber}
\newcommand{\eeq}{\end{equation}}
\newcommand{\bea}{\begin{equation}\begin{aligned}}
\newcommand{\bean}{\begin{equation}\begin{aligned}\nonumber}
\newcommand{\eea}{\end{aligned}\end{equation}}
\numberwithin{equation}{section}
\definecolor{blue}{rgb}{0,0,1}
\definecolor{red}{rgb}{1,0,0}
\begin{document}
\title{Periodic Reeb orbits on prequantization bundles}
\author{Peter Albers \and Jean Gutt \and Doris Hein}
\address{
    Peter Albers\\
   Mathematisches Institut\\
    Ruprecht-Karls-Universit\"at Heidelberg}
\email{palbers@mathi.uni-heidelberg.de}
\address{Jean Gutt\\
Mathematisches Institut\\
Universit\"at zu K\"oln}
\email{gutt@math.uni-koeln.de}
\address{Doris Hein\\
Mathematisches Institut\\
Albert-Ludwigs-Universit\"at Freiburg}
\email{doris.hein@math.uni-freiburg.de}
\keywords{}
\subjclass[2000]{}
\begin{abstract}
	In this paper, we prove that every graphical hypersurface in a prequantization bundle over a symplectic manifold $M$, pinched between two circle bundles whose ratio of radii is less than $\sqrt{2}$ carries either one short simple periodic orbit or carries at least $\operatorname{cuplength}(M)+1$ simple periodic Reeb orbits.
\end{abstract}
\maketitle

\section{Introduction}
A \emph{contact form} on  a manifold $M$ of dimension $2n-1$ is a differential $1$-form $\alpha$ satisfying $\alpha \wedge (d\alpha)^{n-1} \neq 0$ everywhere. The \emph{Reeb vector field} $R_\alpha$ associated to a contact form $\alpha$ is the unique vector field on $M$ characterized by: $\iota(R_\alpha)d\alpha=0$ and $\alpha(R_\alpha)=1$.

In this article we are concerned with prequantization bundles $E$. That is, $E$ is a $\C$-bundle over a symplectic manifold $(M,\om)$ with $c_1(E)=-[\om]\in\H^2(M;\Z)$. In particular, we assume that the cohomology class $[\om]$ of the symplectic form admits an integral lift. A Hermitian connection on $E$ gives rise to a connection 1-form $\alpha$ on the corresponding $S^1$-bundle $\Sigma$ over $M$. The 1-form $\alpha$ is naturally a contact form. Its Reeb vector field is the infinitesimal generator of the $S^1$-action on $\Sigma$, see \cite[Section 7.2]{Geiges_book} for more details. Moreover, the Hermitian connection defines circle resp.~disk bundles $S_R$ resp.~$D_R$ of radius $R>0$. We will extend $\alpha$ to $E\setminus M$ by pullback.

We call a hypersurface $\Sigma_f\subset E$ \emph{graphical} if it can be written as the graph of a function $f:\Sigma\to \R_{>0}$ inside $E$
\begin{equation}
\Sigma_f=\{f(x)x\mid x\in \Sigma\}\;.
\end{equation}
We call $\Sigma_f$ \emph{pinched} between $S_{R_1}$ and $S_{R_2}$ if $\Sigma\subset D_{R_2}\setminus \mathrm{int}D_{R_1}$.
Then $\alpha_f:=f\alpha$ is a contact form on $\Sigma$. The extension of $\alpha$ to $E\setminus M$ is again a contact form if restricted to $\Sigma_f$ and the Reeb flows of $\alpha$ on $\Sigma_f$ and of $\alpha_f$ on $\Sigma$ are equivalent by radial projection.

\begin{Thm}
\label{Main Theorem}
Let  $E$ be prequantization bundle over the closed symplectic manifold $(M^{2n},\om)$. Assume that the graphical hypersurface $\Sigma_f\subset E$ is pinched between  $S_{R_1}$ and $S_{R_2}$ with $\frac{R_2}{R_1}<\sqrt{2}$. Then there exist either infinitely many simple periodic Reeb orbits of $R_{\alpha_f}$ on $\Sigma$ or there are periodic orbits $\gamma_1,\ldots,\gamma_{c}$ of $R_{\alpha_f}$ with $c=\operatorname{cuplength}(M;\Z/2)+1$ such that
	\[
		\pi R_1^2<\A_{\alpha_f}(\gamma_1)<\hdots<\A_{\alpha_f}(\gamma_{c})<\pi R_2^2
	\]
where $\A_{\alpha_f}(\gamma):=\int_{\gamma}\alpha_f$ is the action or period of a Reeb orbit $\gamma$.
\end{Thm}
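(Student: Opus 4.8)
The plan is to detect the periodic Reeb orbits of $\alpha_f$ as the carriers of spectral invariants -- one for each class in a cup-length tower of $\H^*(M;\Z_2)$ -- attached to an action-filtered Floer-theoretic invariant of the domain bounded by $\Sigma_f$, and to squeeze this invariant between the corresponding invariants of the circle bundles $S_{R_1}$ and $S_{R_2}$, which are computable from the Boothby--Wang picture. The pinching $R_2/R_1<\sqrt 2$, equivalently $\pi R_2^2<2\pi R_1^2$, enters twice: it keeps the whole discussion strictly below the first multiple-cover threshold $2\pi R_1^2$, so that the sandwiched invariant behaves like the undeformed one; and it forces every orbit found in the action window $(\pi R_1^2,\pi R_2^2)$ to be \emph{simple}, since a $k$-fold iterate with $k\ge 2$ would have action at least $2\pi R_1^2>\pi R_2^2$.

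\textbf{Setup and squeeze.} Let $U_f$ be the open region of $E$ bounded by $\Sigma_f$ and containing the zero section $M$, so that $D_{R_1}\subset U_f\subset D_{R_2}$, all three with contact-type boundary; on $S_t$ the Reeb flow is, up to reparametrization, the free $S^1$-action, and its unique short closed orbit -- a fibre -- has action $\pi t^2$. Working inside $D_R$ for $R$ slightly larger than $R_2$, I would use Hamiltonians $H=h\circ\rho$, with $\rho$ a defining function of $\Sigma_f$ and $h$ chosen so that the $1$-periodic orbits of $X_H$ near $\Sigma_f$ are exactly the parametrized closed Reeb orbits of $\alpha_f$ of period below a large cutoff (their action equal to the $\alpha_f$-period up to a controlled shift) and so that $H$ is $C^2$-small away from $\Sigma_f$. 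Although $E$ is not exact and $M$ need not be monotone, only $1$-periodic orbits of action $<\pi R_2^2<2\pi R_1^2$ and Floer cylinders of correspondingly small energy occur, so sphere bubbling and Novikov subtleties are inert in this window and the action-filtered Floer homology $\HF^{(a,b)}(H)$, its continuation maps, and -- via the deformation retraction of $D_R$ onto $M$ -- its module structure over $\QH^*(M)$ (which in the relevant action range reduces to $\H^*(M;\Z_2)$) are all well defined; a filtered Rabinowitz Floer homology of $\Sigma_f$ would serve equally well. For $S_t$ the closed Reeb orbits of action in $(0,2\pi t^2)$ form a single Morse--Bott manifold $\cong M$ at level $\pi t^2$, contributing the free rank-$2$ module $\H_*(M)\oplus\H_{*-1}(M)$ over $\H^*(M;\Z_2)$ (the two summands from the $S^1$-reparametrization of the fibre orbits); and since $\pi R_2^2<2\pi R_1^2$ the relevant window stays below $2\pi t^2$ for every $t\in[R_1,R_2]$, so the continuation maps along $D_{R_1}\subset U_f\subset D_{R_2}$ factor the (Morse--Bott, hence isomorphic) continuation from $S_{R_1}$ to $S_{R_2}$ through the invariant of $\Sigma_f$. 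Hence the latter is nonzero in the window $[\pi R_1^2,\pi R_2^2]$ and surjects onto a free $\H^*(M;\Z_2)$-module of rank $\ge 2$.

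\textbf{Cup length and distinctness.} Choose $u_1,\dots,u_\ell\in\H^{>0}(M;\Z_2)$ with $u_1\smile\cdots\smile u_\ell\ne0$, $\ell=\CL(M)$, and set $a_0=1$, $a_i=u_1\smile\cdots\smile u_i$, giving $\CL(M)+1$ classes $a_0,\dots,a_\ell$. For each $i$ let $c(a_i,\alpha_f)$ be the spectral invariant of $\alpha_f$ selected by $a_i$: it equals the $\A_{\alpha_f}$-value of a genuine closed Reeb orbit of $\alpha_f$ on $\Sigma_f$, and the squeeze gives $\pi R_1^2\le c(a_0,\alpha_f)$ and $c(a_\ell,\alpha_f)\le\pi R_2^2$. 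Submultiplicativity of spectral invariants under the cup product (via the usual pair-of-pants/continuation arguments) yields
\[
c(a_0,\alpha_f)\le c(a_1,\alpha_f)\le\cdots\le c(a_\ell,\alpha_f).
\]
Whenever two consecutive values coincide, $c(a_{i-1},\alpha_f)=c(a_i,\alpha_f)=:c$, multiplication by the positive-degree class $u_i$ is nontrivial on the local Floer homology $\HFl$ carried at action $c$, and a Lusternik--Schnirelmann argument then produces a further closed Reeb orbit at an action arbitrarily close to $c$ -- or, in the degenerate borderline case, the lone ``short'' simple orbit of the abstract. Peeling off one orbit at each step of the chain, and using strict monotonicity against $S_{R_1}$ and $S_{R_2}$ (which applies because $\Sigma_f$ is not a circle bundle) to strictify the outer inequalities, yields pairwise distinct closed Reeb orbits $\gamma_1,\dots,\gamma_c$, $c=\CL(M)+1$, with $\pi R_1^2<\A_{\alpha_f}(\gamma_1)<\cdots<\A_{\alpha_f}(\gamma_c)<\pi R_2^2$, each simple by the threshold computation.

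\textbf{Main obstacle.} The heart of the matter is the combination of the squeeze step with the Lusternik--Schnirelmann step: one must construct the action-filtered Floer theory -- its $\H^*(M)$-module structure and its continuation maps -- carefully enough on the non-exact bundle $E$ to guarantee that nothing leaks into the window from outside and that the model continuation $S_{R_1}\to S_{R_2}$ is genuinely an isomorphism; and one must show that a coincidence of consecutive spectral invariants really yields a new geometric orbit rather than just a fat one, which is exactly where the ``one short simple periodic orbit'' alternative appears and where $R_2/R_1<\sqrt 2$ is invoked a second time. The Boothby--Wang computation for $S_t$ and the bookkeeping of Conley--Zehnder index / grading shifts, routine in outline, also require care.
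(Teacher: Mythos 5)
Your proposal follows a genuinely different route from the paper.  You argue at the level of filtered Floer homology and spectral invariants: build a module structure over $\H^*(M;\Z_2)$, squeeze the spectral invariants of $\Sigma_f$ between those of $S_{R_1}$ and $S_{R_2}$, and then run a Lusternik--Schnirelmann argument on coincidences.  The paper instead works at chain/moduli-space level in the style of \cite{AH,Albers_Momin_Cup_length_estimates_for_leafwise_intersections}: it fixes explicit radial Hamiltonians $h(r)$ and $l(r)=h(r/R_0)$, then proves (Theorem~\ref{thm:moduli space}) that the Floer continuation moduli space $\M$ between them is $S^1$-equivariantly diffeomorphic to~$\Sigma$, after which it attaches a Morse bouquet of gradient trajectories to the cylinders, shows there are sequences with $\rho_n\to\infty$ by a one-dimensional cobordism argument, and reads off $\CL(M)+1$ orbits from the resulting breaking.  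The hero of the paper's argument is the $C^2$-bound $|rh''(r)|<1$ combined with the Salamon--Zehnder uniqueness lemma, which forces Floer cylinders to collapse into fibres and makes $\M$ explicitly computable; nothing analogous appears in your sketch.  What your approach would buy, if completed, is a cleaner homology-level organisation; what it costs is precisely what the paper's hands-on construction avoids: a careful construction of an action-window Floer theory with a $\QH^*(M)$-module structure on the non-exact, non-semipositive total space~$E$, together with spectrality and a triangle inequality there, none of which is off-the-shelf.

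Beyond the difference in route, there is a real soft spot in your sketch.  Theorem~\ref{Main Theorem} claims the actions $\A_{\alpha_f}(\gamma_i)$ are \emph{strictly} increasing and strictly inside $(\pi R_1^2,\pi R_2^2)$.  Your Lusternik--Schnirelmann step does not deliver this: a coincidence $c(a_{i-1},\alpha_f)=c(a_i,\alpha_f)$ should, under the reduction to finitely many orbits, lead to a contradiction (the critical set at that level would have to be large), not to ``a further closed Reeb orbit at an action arbitrarily close to $c$'' and certainly not to the ``one short simple orbit'' alternative, which belongs to Corollary~\ref{cor:multiplicity} and not to the theorem you are proving.  Likewise, ``strict monotonicity against $S_{R_1}$ and $S_{R_2}$ because $\Sigma_f$ is not a circle bundle'' does not by itself preclude the boundary spectral value coinciding with $\pi R_1^2$ or $\pi R_2^2$.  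The paper obtains strictness unconditionally by genericity of the auxiliary Morse data: a zero-energy Floer piece in the broken limit would force some $\eta_i$ to pass through one of the finitely many periodic points of $h(r/f(x))$, which is excluded for generic Morse functions.  If you intend to pursue your route you will have to supply a correct version of the LS step that yields strict inequalities under the standing finiteness assumption, and to spell out the filtered product structure on~$E$, where the paper's observation that all the cylinders involved have energy strictly below $1$ (so that $\Om(\pi_2(E))\subset\Z$ rules out sphere bubbling and Novikov contributions) will do most of the work.
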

We recall the definition of cuplength.
\begin{Def}
	Let $M$ be a manifold.
	The \emph{cuplength} of $M$ (with coefficients in $\Z/2$) is defined as
	\[
		\operatorname{cuplength}(M;\Z/2) := \max\left\{k\in \N\,|\,\exists \beta_1,\ldots,\beta_k\in H^{\geq1}(M;\Z/2) \textrm{ such that } \beta_1\cup\hdots\cup\beta_k\neq0\right\}.
	\]
\end{Def}
\begin{Cor}\label{cor:multiplicity}
In the context of Theorem \ref{Main Theorem}, either the minimal period of periodic Reeb orbits of $R_{\alpha_f}$ is less than $\pi R_1^2$ or $\alpha_f$ carries at least $\operatorname{cuplength}(M)+1$ simple periodic Reeb orbits. 
\end{Cor}
  
In short, there is either a short periodic orbit on a pinched graphical hypersurface $\Sigma_f$ or $\operatorname{cuplength}(M)+1$ simple periodic Reeb orbits. 
\begin{Rmk}
Amongst other theorems, a similar result to Theorem \ref{Main Theorem} as been obtained by Ely Kerman in \cite{Kerman_rigid_constellations_of_closed_Reeb_orbits}, but with the bound for the number of critical points being $\tfrac12\dim M+1$. Since the symplectic form is non-degenerate, we have $\operatorname{cuplength}(M)+1\geq\tfrac12\dim M+1$ for closed symplectic manifolds from Stokes' theorem. 
\end{Rmk}

As a particular case of Corollary \ref{cor:multiplicity} together with an observation by \cite{BLMR}, we also find the following. We recall that $S^{2n-1}$ is the $S^1$-bundle corresponding to a prequantization bundle over $\CP^{n-1}$ and $\operatorname{cuplength}(\CP^{n-1})=n-1$. 
	
\begin{Cor}[\cite{EL, BLMR}]\label{thmintro:el}
	Let $\Sigma$ be a hypersurface in $\R^{2n}$ satisfying
\begin{equation}
\langle \nu_{\Sigma}(x),x\rangle >r \quad\forall x \in \Sigma, 
\end{equation}	
where $\nu_{\Sigma}(x)$ is the exterior unit normal vector of $\Sigma$ at $x$. Then $\Sigma$ is starshaped and we denote by $\xi = \ker \alpha_0$ the standard contact structure on $\Sigma$.
Assume there exists a point $x_{0} \in \R^{2n}$ and numbers $0<r\leq R$ with $R<r\sqrt{2}$ such that:
	\begin{equation}
		r\leq \Vert x-x_{0}\Vert\leq R \quad\forall x \in \Sigma , \quad 
	\end{equation}
Then $\Sigma$ carries at least $n$ geometrically distinct periodic Reeb orbits.
\end{Cor}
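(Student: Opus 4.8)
The plan is to read this off from Corollary~\ref{cor:multiplicity} (equivalently Theorem~\ref{Main Theorem}) in the special case $M=\CP^{n-1}$. The relevant prequantization bundle is $E=\O(-1)\to\CP^{n-1}$, the tautological line bundle, for which $c_1(E)=-[\om_{FS}]$ with $[\om_{FS}]$ the positive integral generator of $\H^2(\CP^{n-1};\Z)$; its circle bundle is $S^{2n-1}$, and with the normalization fixed in Section~1 the complement of the zero section is identified with $\R^{2n}\setminus\{0\}$ so that $S_R$ becomes the Euclidean sphere of radius $R$ (whose Reeb orbits have action $\pi R^2$) and so that, for a graphical hypersurface $\Sigma_f$, the periodic orbits of $R_{\alpha_f}$ are exactly the closed characteristics of $\Sigma_f$ viewed in $\R^{2n}$, with $\A_{\alpha_f}$ equal to their periods. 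Finally, $\operatorname{cuplength}(\CP^{n-1})=n-1$, since $a,a^2,\dots,a^{n-1}$ are nonzero for $a\in\H^2(\CP^{n-1};\Z)$ the hyperplane class.

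First I would translate the hypotheses on $S$. After moving $x_0$ to the origin, the inequality $\langle\nu_S(x),x\rangle>0$ for all $x\in S$ is exactly the condition that $S$ be transverse to the radial Liouville vector field, i.e.\ star-shaped; hence $S=\Sigma_f$ is graphical, $S=\{f(x)x\mid x\in S^{2n-1}\}$ with $f\colon S^{2n-1}\to\R_{>0}$ smooth. If $U$ denotes the bounded component of $\R^{2n}\setminus S$, the pinching $B_r\subseteq U\subseteq B_R$ says precisely $r\le f\le R$, i.e.\ $\Sigma_f$ is pinched between $S_{R_1}=S_r$ and $S_{R_2}=S_R$, and the ratio hypothesis reads $R_2/R_1=R/r<\sqrt2$.

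Now Corollary~\ref{cor:multiplicity} applies: \emph{either} $\alpha_f$ has a periodic Reeb orbit of action $<\pi r^2$, \emph{or} $\alpha_f$ carries at least $\operatorname{cuplength}(\CP^{n-1})+1=n$ simple periodic Reeb orbits. In the second case we are done, because distinct simple Reeb orbits are automatically geometrically distinct (a simple orbit covers its image once, in the Reeb direction, so two simple orbits with the same image coincide), and these $n$ orbits translate to $n$ geometrically distinct closed characteristics of $S$. The crux is therefore to exclude the first alternative, i.e.\ to show that every closed characteristic on the star-shaped hypersurface $S=\partial U$ with $B_r\subseteq U$ has action at least $\pi r^2$. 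This is classical; the quickest justification is monotonicity of the first Ekeland--Hofer capacity together with the fact that, for a star-shaped domain $U$, $c_1^{\mathrm{EH}}(U)$ equals the minimal action of a closed characteristic on $\partial U$, giving $\min_\gamma\A_{\alpha_f}(\gamma)=c_1^{\mathrm{EH}}(U)\ge c_1^{\mathrm{EH}}(B_r)=\pi r^2$; alternatively one may cite the Hofer--Zehnder capacity and the existence of a minimal-action closed characteristic on $\partial U$, or the Clarke--Ekeland dual variational estimates underlying \cite{EL}.

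For completeness I would record why this exclusion is exactly what the ratio $\sqrt2$ buys, in case one prefers to argue from Theorem~\ref{Main Theorem} directly: it produces $\gamma_1,\dots,\gamma_n$ with $\pi r^2<\A_{\alpha_f}(\gamma_1)<\dots<\A_{\alpha_f}(\gamma_n)<\pi R^2$, and if $\gamma_i$ and $\gamma_j$ ($i<j$) had the same image then both would be iterates of a common simple orbit $\bar\gamma$ of action $T$, say $\A_{\alpha_f}(\gamma_i)=k_iT$ and $\A_{\alpha_f}(\gamma_j)=k_jT$ with $1\le k_i<k_j$, whence
\[
T=\frac{\A_{\alpha_f}(\gamma_i)}{k_i}\le\frac{\A_{\alpha_f}(\gamma_j)}{2}<\frac{\pi R^2}{2}<\pi r^2
\]
using $R^2<2r^2$, contradicting the action lower bound for $\bar\gamma$. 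Thus $\gamma_1,\dots,\gamma_n$ are pairwise geometrically distinct, which is the assertion of \cite{EL,BLMR}. The only ingredient external to the present paper is the classical lower bound $\pi r^2$ on actions of closed characteristics of star-shaped hypersurfaces containing $B_r$; I expect this — or rather its use to kill the first alternative in Corollary~\ref{cor:multiplicity} — to be the only step requiring care, the remainder being the dictionary between graphical hypersurfaces in $E$ and star-shaped hypersurfaces in $\R^{2n}$.
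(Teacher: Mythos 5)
Your reduction to Theorem~\ref{Main Theorem} via the identification of $S^{2n-1}$ with the unit circle bundle of $\O(-1)\to\CP^{n-1}$, the computation $\operatorname{cuplength}(\CP^{n-1})=n-1$, and the observation that the $\sqrt2$-pinching forbids two of the produced orbits from being iterates of one another, all match the paper. The gap is in the one step you yourself flag as ``the only step requiring care'': the lower bound $\pi r^2$ on the action of \emph{every} closed characteristic. You translate the geometric hypothesis $\langle\nu_\Sigma(x),x\rangle>r$ merely to ``$\langle\nu_\Sigma(x),x\rangle>0$, i.e.\ star-shaped'' and then invoke a capacity argument for arbitrary star-shaped domains containing $B_r$. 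That is not a correct deduction: for a star-shaped but non-convex $U\supseteq B_r$ the Ekeland--Hofer or Hofer--Zehnder capacity does not coincide with the minimal action of a closed characteristic on $\partial U$, and there is no general $\pi r^2$ lower bound on all closed characteristics in this generality. Monotonicity of capacities only bounds \emph{one} spectral value from below; it does not rule out additional short orbits, which is exactly what you must do to exclude the first alternative in Corollary~\ref{cor:multiplicity}.

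The hypothesis you discarded is precisely the ingredient that saves the argument. The condition $\langle\nu_\Sigma(x),x\rangle>r$ (equivalently $T_x\Sigma\cap B_r(x_0)=\emptyset$) is strictly stronger than star-shapedness and is what the paper uses in Lemma~\ref{lemma:action}: from the formula $\alpha_x(X)=\tfrac12\langle X,Jx\rangle$ one gets $R_\alpha = c\,J\nu_\Sigma$ with $c=2/\langle\nu_\Sigma(x),x\rangle\le 2/r$, so $\|R_\alpha\|\le 2/r$ pointwise, and a Wirtinger-inequality estimate then yields $T\ge\pi r^2$ for every simple closed Reeb orbit. This quantitative normal-cone condition bounds the Reeb field and hence all periods from below; mere star-shapedness does not. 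To repair the proposal you should replace the capacity appeal with this pointwise estimate (or cite it from \cite{BLMR}), using the full strength of the hypothesis $\langle\nu_\Sigma(x),x\rangle>r$ rather than only the sign condition.
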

Another proof of this result with the additional assumption that the contact form is non degenerate was given by the second author in \cite{G}.

The study of periodic Reeb orbits can be translated in the study of periodic solutions of Hamiltonian systems and has a long history which, probably, started when Poincar\'e pointed out their interest.
The question of lower bounds on the number of simple periodic Reeb orbits on compact manifold is wide open; it is not even known for the standard contact structure on the sphere in $\R^{2n}$.
In fact, the existence of one periodic Reeb orbit on every compact contact manifold (Weinstein conjecture) is still open in dimension greater than 3 where it was proven by Taubes \cite{Taubes_Weinstein_conjecture_I}.
Taubes result was then improved independently by Cristofaro-Gardiner and Hutchings \cite{GH} and by Ginzburg, Hein, Hryniewicz and Macarini \cite{GHHM}, who proved that
every contact form on a closed three-manifold has at least two embedded periodic Reeb orbits.

On the sphere, more is known; Hofer, Wysocki and Zehnder \cite{HWZ} have shown that on $S^{3}$, every dynamically convex (see \cite{HWZ}) contact form carries either 2 or infinitely many periodic Reeb orbits.
In dimension greater than $3$, the conjecture is that any contact form on the $2n-1$ dimensional sphere defining the standard contact structure admits at least $n$ simple periodic orbits.
This conjecture is studied, for instance, in \cite{GG2,LZ,WHL,EL,BLMR}

For manifolds (of dimension $\geq5$) other than the sphere, very little is known, we refer to \cite{GG2,GK, AM,Kang_Equivariant_symplectic_homology_and_multiple_closed_Reeb_orbits} for precise statements but we would like to point out that nothing is known outside some restricted class of prequantization bundles. For prequantization bundles Ginzburg proved an analog of our main theorem in the $C^0$-small case, see \cite[Theorem 2.7]{Ginzburg_96}.


\subsection*{Acknowledgements}
The authors are grateful to W. Merry for very useful discussions. The authors warmly thanks the anonymous referee for his careful reading and many improvement suggestions.
P.A and J.G are partially supported by the SFB/TRR 191 ``Symplectic Structures in Geometry, Algebra and Dynamics'', funded by the Deutsche Forschungsgemeinschaft.

\section{Basic constructions}

Let $(\Sigma,\alpha)$ be a prequantization space over $(M,\om)$. That is, $(M,\om)$ is a closed connected symplectic manifold with integral symplectic form $[\om]\in\H^2(M,\Z)$. We denote by $\wp:\Sigma\to M$ the principal $S^1$-bundle and by $\wp:E\to M$ the associated complex line bundle with first Chern class $c_1^E=-[\om]$. We refer to these bundles as prequantizations spaces. There exists an $S^1$-invariant 1-form $\alpha$ on $\Sigma$, and hence $E\setminus M$, with the property
\beq
d\alpha=\wp^\star\om
\eeq
which is a contact form on $\Sigma$. For more details we refer to \cite[Section 7.2]{Geiges_book}. If we denote by $\rho$ the radial coordinate on $E$ then the 2-form
\beq
\Om:=d\big(\pi \rho^2\alpha\big)+\wp^\star\om=2\pi \rho d \rho\wedge\alpha+\big(\pi \rho^2+1\big)\wp^\star\om
\eeq
is a symplectic form on $E$.

In the following we will work on the symplectization $S\Sigma:=\Sigma\times \R_{>0}$ of $\Sigma$ which is equipped with the exact symplectic form $\Om=d(r\alpha)=dr\wedge \alpha+rd\alpha$. Here $r$ is the natural coordinate on $\R_{>0}$. The coordinate transformation $r=\pi\rho^2$ induces an exact symplectomorphism $(E\setminus M,\pi \rho^2\alpha)\cong(S\Sigma,r\alpha)$. We point out that the Reeb flow $\theta_t$ of the Reeb vector field $R$ on $\Sigma$ is 1-periodic due to our convention that $S^1=\R/\Z$.

Note that the question of number of periodic Reeb orbits is invariant by rescaling.
Therefore, in the following, we shall take $\pi R_1^2=1$ and thus, we know $\pi R_2^2<2$; we shall denote $\pi R_2^2$ by $R_0$.

\subsection{The Hamiltonian functions and their periodic orbits}
In this paper, the initial choice of the Hamiltonian function plays a crucial role. It is defined as a radial function in the complex line bundle $E\to M$ and has a shape similar to the standard ones in symplectic homology, but eventually becoming constant again.

In order to construct this Hamiltonian function, we first fix a number $R_0\in\R$ with $1<R_0<2$ and choose constants $A,c\in(0,1)$, which in addition satisfy
\begin{equation}
\label{eqn:choices}
\begin{aligned}
&c<\frac{R_0-1}{1-\log R_0},\\[1ex]
&Ac\big(\exp{\tfrac{R_0-1}{c}}-1\big)<1.
\end{aligned}
\end{equation}
The first condition is only needed if $R_0$ is close to 1 as otherwise, the right hand side of the first equation in  \eqref{eqn:choices} is larger than 1 and automatically satisfied by $c\in(0,1)$. The second condition can then be satisfied by choosing $A$ sufficiently small.
 Then we define the function $k\colon\R_{>0}\to\R$ explicitly by the formula
\begin{equation}
k(r)=cr\log r-cr+r(1-c\log A)+Ac-A\;.
\end{equation}
Therefore, we have $k(A)=0, k'(A)=1$ and 
\beq
\label{bound Hessian}
|rk''(r)|=c<1.
\eeq
We next set $B=A\exp\frac{R_0-1}{c}$. Thus, $A< B\approx Ac$, where $\approx$ becomes an equality in the limit $c\to 1$ and $R_0\to 2$. Moreover, the relations between $A$, $c$ and $R_0$ in \eqref{eqn:choices} are equivalent to the more readable conditions 
\begin{equation}
\log A+1<\log R_0B
\end{equation}
and
\beq\label{small_value}
c(B-A)<1.
\eeq
To define $h\colon\R_{\geq 0}\to \R$, we fix sufficiently small constants $\epsilon,\delta,\bar\delta>0$ and set
\beq
h(r)=k(r)\quad\text{for } r\in[A-\bar\delta,B+\delta]\
\eeq
and require 
\begin{equation}
h'(B+\delta)=R_0+\epsilon\;.
\end{equation}
For $r\leq A-\bar\delta$, we choose $h$ to be almost linear down to $r=\bar\delta$  and then turning to be constant such that 
$
-h(0)\notin\left[A,A+c(B-A)\right]+\Z.
$
This can be achieved by making $\bar\delta$ sufficiently small and keeping the property \eqref{bound Hessian}.
For $r\geq B+\delta$, we choose $h$ to be constant with slope $R_0+\epsilon <2$ for some time and then decrease the slope to 
\begin{equation}
h'(C)=R_0\quad \text{at some point } r=C>B.
\end{equation}
After this, we keep slope $R_0-\epsilon$ for a while until we decrease again to $h'(D)=1$ for some possibly large $D>C$. By the same pattern, we decrease the slope further to $1-\epsilon$ for some finite interval before we eventually make $h(r)$ constant for large $r$.
In the non-linear parts, we make all choices such that the condition \eqref{bound Hessian} is satisfied, i.e., the slope decreases more slowly as we move further out.

We adjust the various bits of constant slope, $R_0\pm\epsilon,1-\epsilon$, so that the respective values of $h$ at $B,C,D$ are such that the requirements below are met. To sum this up, we construct $h(r)$ such that we get a shape as in Figure \ref{pic:h} with the following properties:
\begin{equation}\label{eqn:properties_of_h}
\begin{cases}
h'(r)\in[0,R_0+\epsilon] &\text{for all }r\in \R_{>0}\\[1ex]
\displaystyle\max h'(r)=R_0+\epsilon<2\\[1ex]
h'(r)=0 & \Longleftrightarrow\ r \in[0,\bar\delta]\text{ or } r \text{ large} \\[1ex]
h'(r)=1 &\Longleftrightarrow\; r\in\{A,D\}\\[1ex]
h'(r)=R_0 &\Longleftrightarrow\; r\in\{B,C\}\\[1ex]
h(A)=0,\\[1ex]
-h(0)\notin \left[A,A+c(B-A)\right]+\Z\\[1ex]
h(B)=BR_0-cB+cA-A\\[1ex]
CR_0-h(C)\notin \left[A,A+c(B-A)\right]+\Z\\[1ex]
D-h(D)\notin \left[A,A+c(B-A)\right]+\Z\\[1ex]
\displaystyle \lim_{r\to\infty}h(r)\notin \left[A,A+c(B-A)\right]+\Z\\[1ex]
h''(r)\geq0 &\text{for } r\leq R_0B\\[1ex]
|rh''(r)|<1&\text{for all }r\in \R_{>0}\\[1ex]
\end{cases}
\end{equation}
Note that by our choices of $R_0, c$ and $A$, the set $\left[A,A+c(B-A)\right]+\Z$ is not all of $\R$ as we required $c(B-A)<1$ and the conditions on $h(0), C,D$ and $\lim_{r\to\infty}h(r)$ can be satisfied.

Most of these conditions are needed to get a good picture of periodic orbits and their action values. But we point out that the last condition is the most important one as it will enable us to get hold of a certain moduli space (in Step 2 of Theorem \ref{thm:moduli space}).
\begin{figure}[ht] 
\def\svgwidth{80ex} 
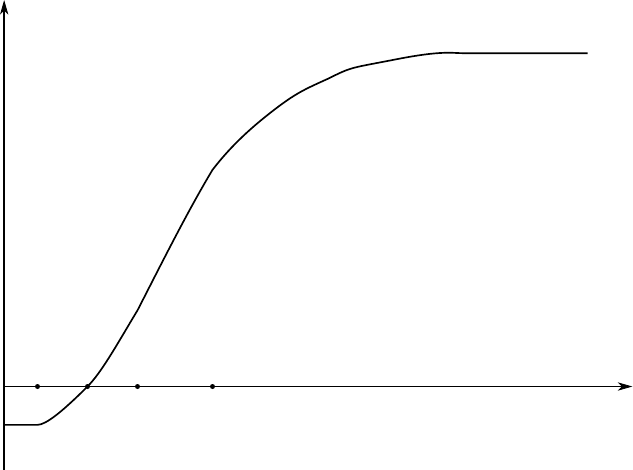
\caption{The function $h$. The numbers at the graph indicate the slope at this point / section.}\label{pic:h}
\end{figure}
Using this function $h(r)$, we now define the Hamiltonian function $H\colon E\to\R$ simply by 
\begin{equation}
H(q):=\begin{cases}
h(r) & \text{if }q=(x,r)\in E\setminus M\cong\Sigma\times\R_{>0} \\
h(0) & \text{if }q\in M
\end{cases}
\end{equation}
The Hamiltonian function $H$ is smooth since $h(r)$ is constant for $r<\bar\delta$.

As a next step, we compute the action values of all 1-periodic orbits for this Hamiltonian function $H=h(r)$. Observe that with our conventions, the Hamiltonian vector field is given by $X_H=h'(r)R$ where $R$ is the Reeb vector field on $\Sigma$. Moreover, since the Reeb flow is 1-periodic the 1-periodic orbits of $X_H$ correspond to values of $r$ with $h'(r)\in\Z$. As we chose $R_0+\epsilon<2$ to be the maximal slope of $h$, the condition $h'\in\Z$ for 1-periodic orbits turns into $h'(r)\in\{0,1\}$. We get four types of periodic orbits:
\begin{enumerate}
\item Constant orbits for $r\in [0,\bar\delta]$, where $h$ is constant,
\item 1-periodic Reeb orbits at $r=A$, where $h'(A)=1$,
\item 1-periodic Reeb orbits at $r=D$, where $h'(D)=1$ and
\item Constant  orbits for very large $r$, where $h$ is again constant. 
\end{enumerate}
To compute the action values of these periodic orbits we recall that the Hamiltonian  action functional $\A_H$ on $(E,\Om)$ is defined on a covering $\tilde\Lambda E$ of the component of contractible loops of the free loop space $\Lambda E$ of $E$. This covering has $\frac{\pi_2(E)}{\ker \Om}\cong\frac{\pi_2(M)}{\ker \om}$ as deck transformation group. We denote elements by $[\gamma,\bar\gamma]$; i.e.~concretely $\gamma$ is a contractible loop and $\bar\gamma$ is a disk bounded by $\gamma$ with the equivalence relation that $(\gamma,\bar\gamma)\sim(\gamma',\bar\gamma')$ if and only if
\begin{equation}
\begin{cases}
\gamma=\gamma' & \\
\Om(\bar\gamma\#-\bar\gamma')=0\;. & \\
\end{cases}
\end{equation}
We call $\bar\gamma$ a capping of $\gamma$. The action functional $\A_H:\tilde\Lambda E\to \R$ is defined by
\begin{equation}
\A_H([\gamma,\bar\gamma]):=\int_{D^2}\bar\gamma^*\Om-\int_0^1H\big(\gamma(t)\big)dt\;.
\end{equation}
An element $[\gamma,\bar\gamma]$ is a critical point of $\A_H$ if and only if
\begin{equation}
\gamma'(t)=X_H\big(\gamma(t)\big)
\end{equation}
i.e. the 1-periodic orbits of $X_H$ with some capping $\bar\gamma$. 

As explained above there are 4 types of orbits, either constant orbits or 1-periodic Reeb orbits, each at certain values for $r$. We point out that all these orbits have natural cappings. For the constant orbits we choose the capping to be a constant disk. For the 1-periodic Reeb orbits we choose the disk in a fiber of $E$ containing the specific Reeb orbit. Using these natural cappings we abbreviate their action values by $\A_H(r)$. Then a simple computation (we recall our convention $S^1=\R/\Z$) leads to
\begin{equation}
\A_H(r)=rh'(r)-h(r)\;.
\end{equation}
The action values of the critical points with other cappings are obtained by changing the natural cappings by an element in $\frac{\pi_2(E)}{\ker \Om}\cong\frac{\pi_2(M)}{\ker \om}$. This changes the action value by an integer since $\om:\frac{\pi_2(M)}{\ker \om}\to \Z$ due to the condition that $[\om]\in \H^2(M;\Z)$.

We now compute the action values $\A_H(r)$ for the orbits of different types using the properties of $h$, see \eqref{eqn:properties_of_h}.
\begin{itemize}
\item For the orbits in class (1), our choice of $h$ implies $\A_H(r)=-h(0)\notin \left[A,A+c(B-A)\right]+\Z$ for $r\in[0,\bar\delta]$.
\item For orbits in class (2), we fixed the value of the Hamiltonian function to be zero and therefore get $\A_H(A)=A$.
\item For the orbits in class (3), we required $\A_H(D)=Dh'(D)-h(D)\notin \left[A,A+c(B-A)\right]+\Z$.
\item Finally, for class (4), the condition on $\lim_{r\to\infty} h(r)$ and that $h$ becomes constant for large $r$ imply that the action value $\A_H(r)$ is not in $\left[A,A+c(B-A)\right]+\Z$.
\end{itemize}
For the second Hamiltonian function $L$, we consider a rescaled version of $h$ by defining
\begin{equation}
l(r)=h\left(\frac{r}{R_0}\right).
\end{equation}
and define as above
$L:E\to\R$  by 
\begin{equation}
L(q):=\begin{cases}
l(r) & \text{if }q=(x,r)\in E\setminus M\cong\Sigma\times\R_{>0} \\
l(0) & \text{if }q\in M
\end{cases}
\end{equation}
In this case, we have 
\begin{equation}\label{Ham_vfield_for_L}
X_L(x,r)=\tfrac{1}{R_0}h'\big(\tfrac{r}{R_0}\big) R(x)
\end{equation} 
and therefore, to get a periodic orbit, we must have $h'\big(\tfrac{r}{R_0}\big)$ to be an integer multiple of $R_0$.
By the conditions on $h$, we still get the constant periodic orbits as above for $r\in[0,R_0\bar\delta]$ and for large $r$ and 1-periodic orbits when $r=R_0B$ and $r=R_0C$.

The action values with the natural cappings are now given by
\begin{equation}
\A_L(r)=rl'(r)-l(r)=\frac{r}{R_0}h'\left(\frac{r}{R_0}\right)-h\left(\frac{r}{R_0}\right). 
\end{equation}
Again the action values for the constant orbits are given by the values of $h$ near $0$ and near $\infty$ and therefore are not in $\left[A,A+c(B-A)\right]+\Z$.
For the orbits at $r=R_0B$, we find
\begin{equation}
\A_L(R_0B)=Bh'(B)-h(B)=BR_0-h(B)=A+c(B-A).
\end{equation}
Finally, at $r=R_0C$, the properties of $h$ imply that 
\begin{equation}
\A_L(R_0C)=Ch'(C)-h(C)\notin \left[A,A+c(B-A)\right]+\Z.
\end{equation}
From now on, we restrict our attention to periodic orbits with action in the interval $I=[A,A+c(B-A)]$. These are the orbits at the first time the slope reaches 1, when the Hamiltonian function starts to increase as usually considered in symplectic homology.
More concretely, we only have the orbits at $r=A$ for $H$ and at $r=R_0B$ for $L$ with their natural capping being the fiber disk in the complex line bundle $E$.

\subsection{The initial moduli space}\label{sec:initial_moduli_space}

We now study the moduli space arising in the continuation homomorphism for a monotone homotopy between the  Hamiltonian functions $H$ and $L$ coming from a monotone homotopy between $h(r)$ and $l(r)=h(\frac{r}{R_0})$ constructed above. For this we define 
\begin{equation}\label{eqn:def_of_h_s}
h_s(r)=\beta(s)h(r)+\big(1-\beta(s)\big)h\big(\tfrac{r}{R_0}\big),
\end{equation}
where $\beta$ is a smooth, monotone decreasing cut-off function which is 1 for $s<-1$ and 0 for $s>0$. Moreover, we require that $\beta'(s)<0$ for all $s\in (-1,0)$.
Then $h_s$ is a monotone homotopy from $h$ to $l$.
Note that the condition $|rh_s''(r)|<1$ is still satisfied for all $r>0$ and $s\in\R$ as for each $s$ the function $h_s$ is a convex combination of $h$ and $l$ which both satisfy the required condition.

Using the contact form $\alpha$ we can split the tangent space of $E$ into a horizontal subspace $H$ and a vertical subspace $V$, i.e.~$TE=V\oplus H$, i.e.~$V=\ker d\wp$ and $H=\ker \alpha$. In particular, $V$ is spanned by the radial vector field and the Reeb vector field. An almost complex structure $J$ on $E$ is therefore represented by a $2\times 2$-matrix in this splitting
\begin{equation}
J=\begin{pmatrix}
i & \mathfrak{B}\\
\mathfrak{A} & j
\end{pmatrix}:V\oplus H\to V\oplus H
\end{equation}
Here $i$ resp.~$j$ is an almost complex structure on $V$ resp.~$H\cong TM$ and $\mathfrak{B}:E\to \text{End}(H,V)$ and $\mathfrak{A}:E\to \text{End}(V,H)$ are maps from $E$ into the endomorphisms satisfying
\begin{equation}
i\mathfrak{B}+\mathfrak{B}j=0\quad\text{and}\quad j\mathfrak{A}+\mathfrak{A}i=0\;.
\end{equation}

\begin{Def}
\label{defn:SFT_type}
We fix an almost complex structure 
\begin{equation}
J=\begin{pmatrix}
i & 0\\
0 & j
\end{pmatrix}
\end{equation}
on $E$ such that $J\frac{\partial}{\partial r}\equiv i\frac{\partial}{\partial r}=\frac{1}{r}R$ and $j$ is $\om$-compatible via the identification $\wp:H\cong TM$.
\end{Def}
%
%
As we will use this very explicitly in the technical parts of this paper, we mention here that this implies that $\frac1r dr = \alpha\circ J$ and therefore $E$ is convex at infinity. Moreover, $\wp:E\to M$ is $J$-$j$-holomorphic and $J$ is $\Om$-compatible. 
 Using this $J$, we can now define the moduli space of interest as
\beq\label{eqn:moduli_space_continuation_traj}
\M=\left\{u\colon S^1\times\R\to E\;\Big|\;\partial_su+J(u)\big(\partial_tu-X_s(u)\big)=0, \;\;
\begin{aligned}
&\lim_{ s\to\pm\infty}u= \gamma_\pm\\[0.5ex] &\Om(\gamma_-\#u\#\bar\gamma_+)=0
\end{aligned}\;
 \right\},
\eeq
where $X_s$ is the Hamiltonian vector field for $h_s$ and by $\gamma_-\#u\#\bar\gamma_+$ we mean the sphere obtained by capping $u$ off with the fiber disks of $\gamma_-$ at $-\infty$ and the fiber disks with its orientation reversed of $\gamma_+$ at $+\infty$. Moreover, the orbit $\gamma_-$ is a 1-periodic orbit of $X_H$ at $r=A$ and $\gamma_+$ is a 1-periodic orbit of $X_L$ at $r=R_0B$, both having action in the interval $I=[A,c(B-A)-A]$ as computed above. Both Hamiltonian action functionals for $H$ and $L$ are Morse-Bott and the critical manifolds formed by the respective orbits $\gamma_\pm$ are both diffeomorphic to $\Sigma$ since $\gamma_\pm$ correspond to simple Reeb orbits.

We point out that since all Hamiltonian functions are autonomous, the moduli space $\M$ carries a free $S^1$-action given by rotating solutions, $(\tau\ast u)(s,t):=u(s,t+\tau)$, $\tau\in S^1$. That this action is free follows from considering the asymptotic limits of $u$. The main result of this section is the following.

\begin{Thm}
\label{thm:moduli space}
The space $\M$ of solutions $u$ to the Floer equation
\beq
\label{eqn:s_dep_Floer_eqn} 
\partial_su+J(u)\big(\partial_tu-X_s(u)\big)=0
\eeq
with 
\begin{equation}\label{eqn:moduli_problem}
\begin{aligned}
u(+\infty)&\in \big\{\gamma_+\in \Crit\A_L\mid\A_L(\gamma_+)=c(B-A)+A\big\}\\
u(-\infty)&\in \big\{\gamma_-\in \Crit\A_H\mid\A_H(\gamma_-)=A\big\}
\end{aligned}
\end{equation}
and 
\begin{equation}
\Om(\gamma_-\#u\#\bar\gamma_+)=0
\end{equation}
is compact and carries a free $S^1$-action. Moreover, it is $S^1$-equivariantly diffeomorphic to $\Sigma$
\begin{equation}
\M\cong_{S^1} \Sigma
\end{equation}
and thus
\begin{equation}
\M/S^1\cong M\;.
\end{equation}
Finally, all solutions $u\in\M$ are Fredholm regular.
\end{Thm}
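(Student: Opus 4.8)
The plan is to identify $\M$ explicitly as a space of Floer cylinders that are, up to the $S^1$-reparametrization, essentially Reeb-flow cylinders living in a single fiber over $M$, and then read off the diffeomorphism type. I would proceed in the following steps.

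\textbf{Step 1: Energy and a priori $C^0$-bounds.} First I would compute the energy $E(u)=\int|\partial_s u|^2 = \A_H(\gamma_-)-\A_L(\gamma_+) + (\text{curvature term})$ of any $u\in\M$; with the natural cappings this is a fixed finite number determined by $A$, $B$, $c$, $R_0$. Using that the homotopy $h_s$ is monotone and that $|rh_s''(r)|<1$ for all $s$, I would derive a maximum-principle/convexity argument in the radial coordinate $r$: the function $r\circ u$ cannot escape the window between $r=A$ and $r=R_0B$, because the SFT-type condition $J\partial_r = \tfrac1r R$ turns the Floer equation into a controlled equation for $r$, and the slopes $h_s'$ are squeezed so that no other periodic orbits with action in $I=[A,c(B-A)-A]$ can occur as intermediate limits. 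This is what pins $u$ into the region where the Hamiltonians are the explicit functions $k$, $l$, and is exactly where the last condition in \eqref{eqn:properties_of_h} is used, as announced in the text.

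\textbf{Step 2: Reduction to a fiberwise problem (the main obstacle).} This is the heart of the argument and the step I expect to be hardest. The claim is that every $u\in\M$ projects under $\wp\colon E\to M$ to a \emph{constant}, i.e. $u$ stays in a single fiber $\C\cong\wp^{-1}(\mathrm{pt})$. I would argue this via the action identity: the energy equals the difference of action values \emph{plus} a nonnegative curvature contribution coming from $\wp^*\om$ paired against $\bar u$, and the specific arithmetic of the action values (again using $c(B-A)<1$ and the conditions forcing all other orbits' actions out of $I+\Z$) forces that curvature term to vanish. Vanishing of $\int \bar u^*\wp^*\om$ together with $d\alpha=\wp^*\om$ and the SFT compatibility of $J$ forces the horizontal part of $du$ to vanish, so $u$ is tangent to the fibers; then $\wp\circ u$ is a $J_M$-holomorphic sphere of zero area, hence constant. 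Here one must be careful that $E$ is a nontrivial line bundle, so ``a fiber'' and the capping disk have to be handled with the deck group $\pi_2(M)/\ker\om$; the point is that restricting to action window $I$ (not $I+\Z$) selects a unique capping class.

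\textbf{Step 3: Solving the fiberwise equation and identifying $\M$.} Once $u$ lies in a fixed fiber $\C$ with $J$ the standard complex structure there, the Floer equation becomes an autonomous ODE-type equation: writing $u(s,t)=\rho(s)e^{2\pi i(t+\phi(s))}$ (polar coordinates), the equation reduces to a gradient-flow equation for $\rho$ with the asymptotics $\rho^2/\pi \to A$ at $-\infty$ and $\rho^2/\pi\to R_0 B$ at $+\infty$, together with $\phi$ constant. Because $h_s$ is strictly monotone in $s$ on $(-1,0)$ and $r h_s''<1$, this scalar boundary-value problem has, for each choice of fiber (a point of $M$) and each choice of angle $\phi\in S^1$, a \emph{unique} solution. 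Hence the map $\M\to \Sigma$, $u\mapsto \lim_{s\to-\infty}u(s,\cdot)$ evaluated as a point of the critical circle $\cong\Sigma$, is a bijection; smoothness and $S^1$-equivariance (where $S^1$ acts by $\tau\ast u(s,t)=u(s,t+\tau)$ on the left and by the $S^1$-bundle structure on $\Sigma$) are immediate from the explicit description. Quotienting by the free $S^1$-action gives $\M/S^1\cong M$.

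\textbf{Step 4: Compactness and Fredholm regularity.} Compactness of $\M$ follows from the $C^0$-bound of Step 1, the fixed energy, standard elliptic bootstrapping, and the exclusion (Step 1–2) of breaking: any broken configuration would need an intermediate orbit with action in $I+\Z$, which the conditions in \eqref{eqn:properties_of_h} forbid, and bubbling is excluded since $\om$ evaluates to integers on $\pi_2(M)$ while the available energy is less than $1$. For Fredholm regularity I would linearize the Floer operator along a solution $u$; using the explicit fiberwise form from Step 3, the linearized operator splits into a ``vertical'' part (an isomorphism, coming from the nondegenerate scalar problem for $\rho$ with the $rh_s''<1$ convexity controlling the relevant Sturm–Liouville operator) and a ``horizontal'' part governed by the linearized holomorphic-curve operator in $M$ at a constant map, which is surjective for index reasons (constant spheres are regular). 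Combining these, $D_u$ is surjective, so every $u\in\M$ is regular. I expect Step 2 to be the main obstacle; Steps 1, 3, 4 are then bookkeeping built on the explicit geometry of $h$ and the SFT-type $J$.
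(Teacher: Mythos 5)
Your outline hits the right global shape (fiberwise reduction, then a scalar ODE, then regularity), but there is one genuine gap and a couple of mislocated or handwaved pieces.

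\textbf{The gap is in your Step~3.} You write the ansatz $u(s,t)=\rho(s)e^{2\pi i(t+\phi(s))}$, i.e.\ you assume from the start that the radial component depends only on $s$ and the angular component is $t$ plus a function of $s$. That is precisely the statement that has to be \emph{proved}, and it is the hardest part of the theorem. A priori, after Step~2 you only know $u$ lies in a single fiber, so $u(s,t)=\bigl(\gamma(b(s,t)),F(s,t)\bigr)$ with both $b$ and $F$ genuinely two-variable. The paper's Step~2 shows $F(s,t)=F(s)$ and $b(s,t)=t$ by linearizing the Floer system \emph{in the $t$-direction}, obtaining a linear Cauchy--Riemann-type equation for $\zeta=(\partial_t b,\partial_t G)$ with zeroth-order term $Fh_s''(F)$, and then invoking \cite[Prop.~4.2]{Salamon_Zehnder_Morse_theory_for_periodic_solutions_of_Hamiltonian_systems_and_the_Maslov_index}, whose hypothesis is exactly $\|Fh_s''(F)\|<1$. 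This is where the condition $|rh_s''(r)|<1$ in \eqref{eqn:properties_of_h} is actually used, not in a $C^0$ bound for $r\circ u$ as you suggest in your Step~1. Without this Salamon--Zehnder input, nothing forces the $t$-dependence to disappear, and the reduction to an ODE does not follow.

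Two smaller points. Your route to the fiberwise reduction via an ``action $+$ curvature'' identity works but is a detour with capping subtleties; the paper simply computes $E(v)=\int u^*d\alpha=\int\gamma_+^*\alpha-\int\gamma_-^*\alpha=1-1=0$ by Stokes, since both asymptotic orbits are simple closed Reeb orbits traversed once, so $v$ is a holomorphic map of zero energy and hence constant. And your Fredholm argument is too vague: the horizontal part is not ``a constant sphere'' but a Cauchy--Riemann operator on the cylinder with vanishing asymptotics, killed by a maximum principle; the vertical part does not reduce to an abstract Sturm--Liouville claim but requires the paper's explicit treatment, which transports the $R$-component to a second-order elliptic PDE on a punctured disk (where ellipticity is checked in cartesian coordinates and the maximum principle of \cite[Thm.~3.1]{Gilbarg_Trudinger_Elliptic_partial_differential_equations_of_second_order} applies) and solves the radial component as an explicit first-order ODE in $s$, using that $G'\neq0$ on $(-1,0)$, which itself requires a separate phase-plane argument. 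So the regularity step, while correct in spirit, omits the content that makes it go through.
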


Of course, the statement that $\M$ is compact follows from the rest of the statement as $\Sigma$ is compact. Therefore, we do not need to prove compactness separately and it suffices to prove $\M\cong \Sigma$. 

Even though we will not use it, it is worth pointing out that elements in $\M$ are contributions to the continuation homomorphism between the Floer homologies of $H$ and $L$.

Before proving this theorem, we give an outline of the proof by mentioning the main steps:
\begin{enumerate}
\item[Step 1] We first show that all elements in $\M$ are contained in a fiber over a periodic Reeb orbit $\gamma$ on $\Sigma$ of the bundle $E\to M$ by an energy estimate. In particular, this shows that a solution to the Floer equation in $\M$ can only exists if the asymptotic critical points $\gamma_\pm$ are in the same fiber, i.e., they correspond to the same Reeb orbit $\gamma$ on $\Sigma$.
\item[Step 2] According to Step 1 we write $u(s,t)=\big(\gamma\big(b(s,t)\big),F(s,t)\big)\in\M$, where $F$ is the radial coordinate. Then we show that all solutions $u\in\M$ satisfy $b(s,t)=t$ and $F(s,t)=F(s)$ for some function $F\colon\R\to\R_{>0}$, i.e.~$u(s,t)=\big(\gamma(t),F(s)\big)$.
\item[Step 3] According to Step 2 the Floer equation for $u$ reduces to an ODE for $F$. We prove existence and uniqueness of a solution $F$ for any fixed Reeb orbit $\gamma$ using the asymptotic conditions at both ends. This completes the proof of $\M\cong_{S^1}\Sigma$ and the equality of the $S^1$-actions by rotation in the fiber.
\item[Step 4]Finally, we prove Fredholm regularity for our solutions.
\end{enumerate}

\begin{proof}
As outlined above, the proof is done in several steps.\\

\noindent {\bf Step 1:} Let $u$ be an element of the moduli space $\M$ and define $v:=\wp (u)$, where $\wp\colon E\to M$ is the bundle projection. By our choice of $J$, see Definition \ref{defn:SFT_type}, the projection $\wp$ is holomorphic with respect to the complex structures $J$ on $E$ and $j$ on $M$. As the Reeb vector field (and thus also the Hamiltonian vector field $X_s$) always point in fiber direction, we have $\wp_\star X_s=0$. Therefore, $v$ solves the unperturbed Cauchy-Riemann equation
\begin{equation}
\partial_sv+j(v)\partial_tv=0
\end{equation}
and is of finite energy. By removal of singularity $v$ extends to a holomorphic sphere, which we denote by $v$ again. Since the asymptotic conditions for $u$ are two periodic orbits in a fiber of $E$ they project via $\wp$ to points in $M$ and thus $\wp(\gamma_-\#u\#\bar\gamma_+)=v$. The projection $\wp$ induces an isomorphism $\wp_*:\pi_2(E)\cong\pi_2(M)$ under which $\om=\Om:\pi_2(E)\cong\pi_2(M)\to\Z$. We conclude that 
\begin{equation}
E(v)=\int_{S^2}v^*\om=\om(v)=\Om(\gamma_-\#u\#\bar\gamma_+)=0\;.
\end{equation}
Therefore, $v$ is constant and $u$ is contained in the fiber over this constant. This completes the proof of Step 1.\\
%
%

\noindent {\bf Step 2:} Since every element in $\M$ is contained entirely in a fiber of $\wp:E\to M$ we can use the Reeb direction and the radial direction as a coordinate system. Thus, we can write an element $u\in\M$ as
\begin{equation}
u(s,t)=\big(\gamma(b(s,t)),F(s,t)\big),
\end{equation}
where $\gamma$ is the Reeb orbit in that fiber and $F$ denotes the radial component. Implicitly, we assume that $u$ does not hit the zero section $M$ of $E$. This can be seen as follows.

By our choice of almost complex structure $J$ the zero section $M$ is a holomorphic submanifold of $E$ of codimension 2. By choice of the Hamiltonian $h_s$ the Hamiltonian vector field $X_s$ vanishes close to $M$, see Figure \ref{pic:h} and \eqref{eqn:def_of_h_s}, i.e.~$u$ is actually holomorphic near $M$. By positivity of intersection, the intersection number $u\bullet M$ is non-negative and vanishes if and only if $u(\R\times S^1)\cap M=\emptyset$. This intersection number is well-defined since asymptotically $u$ is disjoint from $M$. We claim, of course, that the intersection number vanishes. To compute this we consider the intersection number $\big(\gamma_-\#u\#\bar\gamma_+\big)\bullet M$ between a 2-sphere $\gamma_-\#u\#\bar\gamma_+$ and the closed manifold $M$. This is a usual topological intersection number and can be computed as follows
\begin{equation}
\big(\gamma_-\#u\#\bar\gamma_+\big)\bullet M=c_1^E\big(\underbrace{\wp(\gamma_-\#u\#\bar\gamma_+)}_{=v}\big)=-\om(v)=0.
\end{equation}
The first equality follows from the definitions of the first Chern class and the bundle $E$ and the second again by construction of $E$. Now, the intersection numbers $\big(\gamma_-\#u\#\bar\gamma_+\big)\bullet M$ and $u\bullet M$ agree since $\gamma_-$ and $\bar\gamma_+$, being fiber disks, each intersect $M$ transversely in one point but with opposite sign. All in all we conclude that $u\bullet M=0$ and by positivity of intersection $u(\R\times S^1)\cap M=\emptyset$ as claimed at the beginning of this step.

The Floer equation \eqref{eqn:s_dep_Floer_eqn} in these new coordinates becomes a system of PDEs for $F$ and $b$
\bea
\label{eqn: Floer coordinates}
\partial_sb+\tfrac{1}{F}\partial_tF&=0\\
\partial_sF-F\partial_tb+Fh'_s(F)&=0.
\eea
Dividing the second equation by $F$ and setting $G=\log F$, this turns into
\bea
\label{eqn:Floer exp coordinates}
\partial_sb+\partial_tG&=0\\
\partial_sG-\partial_tb+h'_s(e^G)&=0.
\eea
Due to the Morse-Bott character in Reeb direction we work in Banach spaces with exponential weights, see \cite[Appendix A]{Frauenfelder_Arnold_Givental_Conjecture} or \cite{Bourgeois_A_Morse_Bott_approach_to_contact_homology} for a full account. Here we need only a small portion which we will explain now.

In Reeb direction we need to require exponential convergence of $u$ to the asympotic periodic orbits $\gamma_\pm$ in order for the Floer equation \eqref{eqn:Floer exp coordinates} to represent a Fredholm operator. This, in turn, means that we need to require exponential convergence for $b(s,t)$ to the function $(s,t)\mapsto t$ and for $\partial_sb(s,t)$ to zero. For that we fix $\kappa_0>0$ smaller than the spectral gap of the Hessian of $\A_L$ at $\gamma_+$ resp.~of $\A_H$ at $\gamma_-$ where $\gamma_\pm$ are the asymptotic periodic orbits of the solution $u\in\M$ we are considering. Then we choose a smooth function $\kappa(s):\R\to\R$ which agrees with the function $s\mapsto \text{sign}(s)\kappa_0s$ for $|s|\geq1$. Moreover, we require $|\kappa'(s)|\leq\kappa_0$ for all $s\in \R$. In the Floer equation \eqref{eqn: Floer coordinates} and \eqref{eqn:Floer exp coordinates} we then consider only functions $b(s,t)$ such that $b(s,t)-t\in W^{1,p,\kappa_0}(\R\times S^1,\R)$, i.e.~functions $b:\R\times S^1\to \R$ such that $(b(s,t)-t)e^{\kappa(s)}\in W^{1,p}(\R\times S^1,\R)$, where $p>2$. That is, $b$ has exponential converence at $\pm\infty$ in $s$ of rate at least $\kappa_0$ to the asymptotic periodic orbit. We point out that the Banach spaces $W^{1,p,\kappa_0}(\R\times S^1,\R)$ and $W^{1,p}(\R\times S^1,\R)$ are isomorphic via the map $\Xi:\mathfrak{b}\mapsto \mathfrak{b}(s,t)e^{\kappa(s)}$. Moreover, all solutions of \eqref{eqn:Floer exp coordinates} automatically lie in $W^{1,p,\kappa_0}(\R\times S^1,\R)$ due to the usual exponential decay estimates for the Floer equation and the choice of $\kappa_0$. We point out that we may choose $\kappa_0>0$ as small as we wish. We will use this below. 

The following argument is based on an argument by Salamon-Zehnder from \cite{Salamon_Zehnder_Morse_theory_for_periodic_solutions_of_Hamiltonian_systems_and_the_Maslov_index}. We are grateful to W.~Merry for pointing us to the article \cite{Bourgeois_Oancea_An_exact_sequence_for_contact_and_symplectic_homology} by Bourgeois-Oancea who use \cite{Salamon_Zehnder_Morse_theory_for_periodic_solutions_of_Hamiltonian_systems_and_the_Maslov_index} in a similar fashion.

We linearize equation \eqref{eqn:Floer exp coordinates} in $t$-direction and set $\zeta=(\zeta_1,\zeta_2):=(\partial_tb-1,\partial_tG)$:
\bea
\partial_s\zeta_1+\partial_t\zeta_2&=0\\
\partial_s\zeta_2-\partial_t\zeta_1+e^Gh''_s(e^G)\zeta_2&=0.
\eea
Of course, $\zeta_1$ has exponential convergence to zero of rate at least $\kappa_0$. We will analyze solutions of this linearized equation, i.e.~elements in the kernel of the linearized operator. The following argument can be found in all detail in \cite[Appendix A]{Frauenfelder_Arnold_Givental_Conjecture} or \cite{Bourgeois_A_Morse_Bott_approach_to_contact_homology}. Conjugating the linearized operator with the isomorphism $\Xi:\zeta_1\mapsto \zeta_1(s,t)e^{\kappa(s)}$ turns $\partial_s\zeta_1$ into $\partial_s\zeta_1-\kappa'(s)\zeta_1$ and leaves $\partial_t\zeta$ unchanged, indeed:
\begin{equation}
e^{\kappa(s)}\cdot\partial_s\big(\zeta_1(s,t)\cdot e^{-\kappa(s)}\big)=\partial_s \zeta_1(s,t)-\kappa'(s)\zeta_1(s,t).
\end{equation}
Thus, the kernel of the linearized operator on the space $W^{1,p,\kappa_0}$ corresponds under the isomorphism $\Xi$ to all solutions of
\bea
\label{eqn:t-lin Floer}
\partial_s\zeta_1+\partial_t\zeta_2-\kappa'(s)\zeta_1&=0\\
\partial_s\zeta_2-\partial_t\zeta_1+e^Gh''_s(e^G)\zeta_2&=0
\eea
where now $\zeta_1$ and $\zeta_2$ are in $W^{1,p}$, i.e.~are \textit{not} required to have exponential decay anymore.\footnote{
The asymmetry of \eqref{eqn:t-lin Floer} in $\zeta_1$ and $\zeta_2$ is due to that fact that the radial direction is the normal direction to the Morse-Bott manifold $\Sigma$ and therefore the operator is ''Fredholm in normal direction''. In particular, we do not need to require exponential decay in normal direction. We could, though, which would lead to an additional term $-\kappa'(s)\zeta_2$ in \eqref{eqn:t-lin Floer}. The remaining argument is essentially unaffected since the matrix norm is changed from $\|\kappa'\|+\|Fh''_s(F)\|$ to $\|\kappa'\|+\|Fh''_s(F)-\kappa'\|$ which we still can arrange to be strictly less than 1. 
} 
Combining the two equations above into a vector equation for $\zeta=(\zeta_1,\zeta_2)$ and switching back from $G$ to $F$ gives
\begin{equation}\label{eqn:lin_Floer_matrix}
\partial_s\zeta+\begin{pmatrix}0&-1\\1&0\end{pmatrix}\partial_t\zeta+
\begin{pmatrix}
-\kappa' & 0\\
0 & Fh_s''(F)
\end{pmatrix}
\zeta=0.
\end{equation}
As pointed out above we may choose the constant $\kappa_0>0$ as small as we like. The matrix norm of $\begin{pmatrix}
-\kappa' & 0\\
0 & Fh_s''(F)
\end{pmatrix}$ equals $\|\kappa'\|+\|Fh''_s(F)\|$. Using our choice of the Hamiltonian function, namely $|rh_s''(r)|<1$ for all $r$, and choosing $\|\kappa'\|\leq\kappa_0$ sufficiently small we can arrange that this matrix norm is strictly less than 1:
\begin{equation}
\|\kappa'\|+\|Fh''_s(F)\|<1\;.
\end{equation}
Now we are in the position to apply \cite[Proposition 4.2]{Salamon_Zehnder_Morse_theory_for_periodic_solutions_of_Hamiltonian_systems_and_the_Maslov_index}, which asserts that $\zeta$ must be independent of $t$.
Thus \eqref{eqn:t-lin Floer} simplifies to
\bea\label{eqn:t-lin_after_SZ}
\partial_t\zeta_1&=0\\
\partial_t\zeta_2&=0\\
\partial_s\zeta_1-\kappa'(s)\zeta_1&=0\\
\partial_s\zeta_2+Fh''_s(F)\zeta_2&=0.
\eea
The first and third equation imply that $\zeta_1\equiv0$ since for $s\geq1$ it is of the form $s\mapsto a_1e^{\kappa_0s}$ and for $s\leq -1$ of the form $s\mapsto a_2e^{-\kappa_0s}$, $a_1,a_2\in\R$, neither of which is an $L^2$-function unless $\zeta_1\equiv0$.

The second equation says that  $\zeta_2$ is independent of $t$. Since $H$ is a Morse-Bott Hamiltonian, the asymptotic periodic orbit $\lim_{s\to-\infty}u=:\gamma_-$ sits in a critical manifold diffeomorphic to $\Sigma$, see above. In particular, the Morse-Bott property implies exponential convergence of $u=(\gamma,F)$ and all its derivatives to $\gamma_-$ in normal direction. The normal direction coincides here with the radial direction. In other words $F(s)$ converges exponentially fast to $r=A$ and all its derivatives converge exponentially fast to $0$. Therefore $\zeta_2=\partial_tG=\tfrac1F \partial_tF$ also converges to $0$, that is 
\begin{equation}
\lim_{s\to-\infty}\zeta_2=0.
\end{equation}
Since $\zeta_2$ is independent of $t$ the last equation in \eqref{eqn:t-lin_after_SZ} is now an ODE for $\zeta_2(s)$. For $s\to-\infty$ the coefficient in the 0-th order term $Fh''_s(F)$ becomes $s$-independent and converges to $Ah''(A)=c\in(0,1)$, see \eqref{bound Hessian}. Therefore, asymptotically, we have
\begin{equation}
\zeta_2\sim e^{-cs}\quad\text{as}\quad s\to-\infty.
\end{equation}
Together with the vanishing asymptotic condition for $\zeta$, this implies that $\zeta_2\equiv 0$.

Going back to the original equation in $b$ and $G$, we now have found that $\partial_tG=\zeta_2=0$. This shows that $G$, and therefore $F=e^G$, is independent of $t$. 

For $b$, we now use the first equation in \eqref{eqn:Floer exp coordinates} to find that $\partial_sb=0$. By the above argument, we know that $\partial_tb-1=\zeta_1=const$ and therefore, we have
\beq
b(s,t)=const\cdot t+b(0).
\eeq
As $u$ converges to the Reeb orbit $\gamma(t)$, this asymptotic condition implies that 
\begin{equation}
b(s,t)=t\quad\forall t.
\end{equation}
This completes the proof of Step 2. The details of $F$ will be studied in Step 3.\\

\noindent {\bf Step 3:} In this step, we prove existence and uniqueness of Floer trajectories in the moduli space $\M$ in the fiber over a given Reeb orbit $\gamma$. Step 2 reduces the Floer equation \eqref{eqn:s_dep_Floer_eqn}, see also \eqref{eqn: Floer coordinates},  to a 1-dimensional ODE for $F$
\bea
\label{eqn: Floer ODE}
\partial_sF&=-F(s)(h'_s(F)-1)\\
\lim_{s\to -\infty}F(s)&=A\\
\lim_{s\to\infty}F(s)&=BR_0.
\eea
We want to show existence and uniqueness for $F$. For this we use a phase space analysis at the boundaries. 

We note that for $s<-1$, the function $h_s(r)=h(r)$ is independent of $s$ and that $h'(r)$ is non-decreasing on the interval $r\leq B+\epsilon$ with $h'(A)=1$. Therefore, for $s<-1$, the function $F(s)\equiv A$ is a solution.

Now we show that no other function solves the ODE problem for $s<-1$. By the asymptotic condition at $s=-\infty$, the function $F(s)$ is less than $B$ for some $s_0<-1$ since $A<B$. 

If $F(s_0)<A$, then $h'(s_0)<1$ and therefore, the coefficient of $F$ in \eqref{eqn: Floer ODE} is positive. This shows that $F$ is increasing in $s$. In turn, as $s$ decreases, $F(s)$ decreases further and further and this contradicts the asymptotic condition $\lim_{s\to -\infty}F(s)=A$.

Similarly, if $F(s_0)>A$, then $h'(s_0)>1$ and therefore, the coefficient of $F$ in \eqref{eqn: Floer ODE} is negative. This shows that $F$ is decreasing in $s$. Again in turn, as $s$ decreases, $F(s)$ is increasing and this contradicts again the asymptotic condition $\lim_{s\to -\infty}F(s)=A$.

Combined, this shows that for $s<-1$, the only solution satisfying the asymptotic condition at $-\infty$ is the constant solution $F(s)\equiv A$. Therefore, we can turn \eqref{eqn: Floer ODE} into an initial value problem. In particular, there is a unique maximal solution to the ODE with asymptotic condition $\lim_{s\to -\infty}F(s)=A$. It remains to check that this maximal solution is defined on $\R$ and satisfies the asymptotic condition as $s\to\infty$. 

For this we first switch again to $G=\log F$. The ODE for $G$ is then
\begin{equation}\label{eqn:ODE_for_G}
G'(s)=1-h_s'(e^G)
\end{equation}
and we have $G(s)=\log A$ for $s\leq -1$. By construction of the Hamiltonian function, $0\leq h'_s(r)<2$ for all $s$ and $r$. In particular, the maximal solution is defined on $\R$. 

From here on, we can again use a phase space analysis for the behavior for $s\geq0$, where $h_s(r)=l(r)$.
Our choice of $c$ and $B$ imply that we have 
\begin{equation}
G(0)\leq\log A+1<\log R_0B.
\end{equation} 

Therefore, we have $F(0)<R_0B$ and therefore, we have $h_s'\big(F(0)\big)<1$. This implies that $F$ is increasing and therefore converging to the next value where $h'(r)=1$ which is $r=R_0B$.
%
This proves the desired asymptotic behavior of our solution and therefore existence and uniqueness of a Floer trajectory in every fiber and therefore also that $\M\cong\Sigma$ and compactness of $\M$.\\

\noindent {\bf Step 4:}  It remains to prove Fredholm regularity of the Floer trajectories $u(s,t)=\big(\gamma(t),F(s)\big)$ studied above; i.e.~we need to show that the Fredholm operator given by the linearized Floer equation is surjective. 

%
%

We recall the Floer equation \eqref{eqn: Floer coordinates} for two functions $b(s,t), F(s,t):S^1\times \R\to\R$ is
\bea
\partial_sb+\tfrac{1}{F}\partial_tF&=0\\
\partial_sF-F\partial_tb+Fh'_s(F)&=0.
\eea
We already proved in Step 2 that $b(s,t)=t$ and that $F(s,t)$ is independent of $t$.Therefore the Floer equation reduces to
\begin{equation}
\begin{aligned}
\partial_t F&=0\\
\partial_sF+\big(h'_s(F)-1\big)F&=0.
\end{aligned}
\end{equation}
Therefore, the linearized operator is 
\begin{equation}
X(s,t)\mapsto\Big(\partial_t X,\partial_sX+\big(h''_s(F)F+h'_s(F)-1\big)X\Big)\;.
\end{equation}
Proving that this operator is surjective is equivalent to proving that the formal adjoint is injective. That is, we need to prove that the only solution to the equations
\begin{equation}
\begin{aligned}
\partial_tX&=0\\
\partial_sX-\big(h''_s(F)F+h'_s(F)-1\big)X&=0
\end{aligned}
\end{equation}
is $X=0$. Of course, the first equation implies that $X$ is independent of $t$ and we again have an ODE. Now from the definition of $h_s$ and the properties of $F$, see equations \eqref{eqn:def_of_h_s} and \eqref{eqn: Floer ODE}, we conclude for $s$ very large and positive that 
\begin{equation}
h''_s(F)F+h'_s(F)-1=\tfrac{1}{R_0^2}h''\big(\tfrac{F}{R_0}\big)F+\tfrac{1}{R_0}h'\big(\tfrac{F}{R_0}\big)-1
\end{equation}
converges to 
\begin{equation}
\tfrac{B}{R_0}h''(B)>0.
\end{equation}
In particular, $X$ solves for large $s$ an equation of the form
\begin{equation}
\partial_sX-\tilde\kappa(s)X=0
\end{equation}
with $\tilde\kappa(s)>0$ and $\lim_{s\to\infty}\tilde\kappa(s)>0$. Thus, for $s\to\infty$, $X$ is exponentially growing unless it is constant. Since $X$ is an $L^2$-function it necessarily vanishes. Finally, $X(s)$ solves an ODE and therefore has to vanish identically as we were required to prove.

This establishes Fredholm regularity of the unique Floer trajectory in each fiber and completes the proof of Theorem \ref{thm:moduli space}.
\end{proof}

\subsection{The pinched contact form}

We now consider a contact form on $\Sigma$ induced by the embedding of $\Sigma$ as graph of a function $f\colon \Sigma\to\R_{>0}$ in the complex line bundle $E\to M$, i.e., the contact form $\alpha$ on the hypersurface $\Sigma_f$.
That the contact form is pinched between two multiples of the standard contact form above is reflected by the condition
\begin{equation}
1\leq f(x)\leq R_0
\end{equation}
for all $x\in\Sigma$. Studying the Reeb flow of $\alpha$ on $\Sigma_f$ is equivalent to studying the Hamiltonian dynamics of $h_f(x,r):=h\big(\frac{r}{f(x)}\big)$ on $E$, for which $\Sigma_f$ is a level set.

We now show for completeness that the 1-periodic orbits of this Hamiltonian also correspond to periodic Reeb orbits of the contact form $\alpha_f=f\alpha$ on $\Sigma$.

As a first step, we define a vector field $V_f$ on $\Sigma$ by
\begin{equation}
\label{eq:cont_ham}
	\alpha(V_f) = 0, \qquad df(R)\alpha - df = d \alpha(V_f, \cdot),
\end{equation}
where $\alpha$ is the standard contact form on $\Sigma$, i.e., $V_f$ is contained in the contact distribution, where the second equation uniquely defines the vector field.

\begin{Lemma}
\label{lem:Ham=Reeb}
The Hamiltonian vector field $X_{h_f}$ of $h_f$ is given by 
\begin{equation}
\label{eq:symp_grad}
	X_{h_f}(x,r) = \frac{h'\big( \frac{r}{f(x)} \big) }{f(x)^2} \Big( f(x) R(x) -  V_f(x) + r df(x)[R(x)] \partial_r \Big) .
\end{equation}
If $\gamma(t) = \big(x(t),r(t)\big)$ is a 1-periodic orbit of $X_{h_f}$ then $r(t) =\bar{c} f\big(x(t)\big)$ for some constant $\bar{c}$ and we define the curves $z(t) := x\big(t/ h'(\bar{c})\big)$. 
With these definitions, $z$ is a periodic orbit of $R_f$ of period $h'(\bar{c})$, where $R_f$ is the Reeb vector field on $\Sigma$ defined by $\alpha_f$.
\end{Lemma}

In particular, the relation between periodic Reeb orbits on $\Sigma_f$, which are periodic orbits of the Hamiltonian $h_f$, and periodic Reeb orbits is given by radial projection.

\begin{proof}
The formula for the Hamiltonian vector field is checked by computing $dh_f$ and plugging $X_{h_f}$ into $\omega=d(r\alpha)$. This definition uses the natural splitting of the tangent space into the radial component, the Reeb direction and the contact distribution.
For the 1-form $dh_f$, we have 
\[
	dh_f(x,r)[ v + a \partial_r] = h'\left( \frac{r}{f(x)} \right) \left(\frac{a}{f(x)}- \frac{r}{f(x)^2}df(x)[v]\right),
\]
where $v$ is a tangent vector to $\Sigma$ and $a\in\R$. We now compute $i_{X_{h_f}}d(r\alpha)(v+a\partial_r)$ using the expression for the Hamiltonian vector field as stated.
\begin{align*}
d (r\alpha) \big(X_{h_f}(x,r), (v+a\partial_r\big) & = (d r \wedge \alpha + r d \alpha) \big(X_{h_f}(x,r), v + a \partial_r \big) \\
	& = \frac{h'\big( \frac{r}{f(x)} \big) }{f(x)^2}  (d r \wedge \alpha + r d \alpha) \Big( f(x)R(x) - V_f(x) + r df(x)[R(x)] \partial_r , v + a \partial_r)\Big) \\
	& =  \frac{h'\big( \frac{r}{f(x)} \big) }{f(x)^2} \Big(  r df(x)[R(x)] \alpha(v) - f(x) a - r d \alpha(V_f(x), v)\Big) \\
	& \stackrel{(*)}{=} \frac{h'\big( \frac{r}{f(x)} \big) }{f(x)^2} \Big(   r df(x)[R(x)] \alpha(v) - f(x) a - r  df(x)[R(x)]\alpha(v) + r df(x)[v] \Big) \\
	& = \frac{h'\big( \frac{r}{f(x)} \big) }{f(x)^2} ( - f(x)a +  r df(x)[v]) \\
	& = - dh_f(x,r)[ v + a \partial_r],
\end{align*}
where $(*)$ uses the second equation in  \eqref{eq:cont_ham}. This shows that \eqref{eq:symp_grad} indeed is the Hamiltonian vector field.

Since $h_f$ is autonomous, the fact that $\gamma$ is a 1-periodic orbit of $X_{h_f}$ implies that $h_f\big(\gamma(t)\big)$ is constant. Thus if $\gamma(t) = \big(x(t),r(t)\big)$ then $r(t)/ f\big(x(t)\big)$ is constant, since $h$ is strictly increasing. Thus $\gamma(t) =\big(x(t), \bar{c} f(x(t)\big)$ for some contant $\bar{c}$. 

Set $z(t) := x\big(t/h'(\bar{c})\big)$ and 
we claim that $z$ is a periodic Reeb orbit of $R_f$. For this, we compute 
\begin{equation}
 \dot z (t)= \frac{1}{h'(\bar{c})} \dot x \big(t/h'(\bar{c})\big) = \frac{1}{f\big(z(t)\big)}R\big(z(t)\big) - \frac{1}{f\big(z(t)\big)^2}V_f\big(z(t)\big)
 \end{equation}
from \eqref{eq:symp_grad}. 
Thus to complete the proof it suffices to show that
\begin{equation}
\label{eq:R_sigma}
	R_f(x) = \frac{1}{f(x)}R(x) - \frac{1}{f(x)^2}V_f(x) .
\end{equation}
is the Reeb vector field of $\alpha_f$ on $\Sigma$.
 Clearly, one has $\alpha_f(R_f) = 1$. Now 
\begin{align*}
 d \alpha_f(R_f, \cdot) & = (d f \wedge \alpha + f d \alpha)\left( \frac{1}{f(x)}R(x) - \frac{1}{f(x)^2}V_f(x) , \cdot \right) \\
 & = \frac{1}{f}df(R) \alpha  - \frac{1}{f} df  - \frac{1}{f^2}df(V_f) \alpha  + \frac{1}{f^2} \alpha(V_f) df - \frac{1}{f}d \alpha(V_f,\cdot) \\
 & =  - \frac{1}{f^2} df(V_f) \alpha= 0 ,
\end{align*}
where we used both equations in  \eqref{eq:cont_ham} again. Furthermore, we used that $df(V_f) = 0$ which can be seen by feeding $V_f$ to both sides of the second equation of \eqref{eq:cont_ham}.
\end{proof}

The function $h$ above is monotone increasing and therefore, this pinching condition also implies that 
\beq
\label{eqn:pinching homotopy}
h(r)\geq h\left(\frac{r}{f(x)}\right)\geq h\left(\frac{r}{R_0}\right).
\eeq
We now want to define a homotopy from $h(r)$ to $h\big(\frac{r}{R_0}\big)$ that is not strictly radial as above, but passes through $h\big(\frac{r}{f(x)}\big)$ instead.


Similar to the function $\beta$ above, we now define three functions $\beta_1,\beta_2^\rho, \beta_3^\rho\colon \R\to[0,1]$ depending smoothly on a parameter $\rho>0$ such that 
\bea
\beta_1(s)+\beta_2^\rho(s)+ \beta_3^\rho(s)=1 &\quad \forall s\in\R\\
\beta_1(s)\equiv 1&\quad  \forall s\leq-1\\
\beta_2^\rho(s)\equiv 1&\quad \forall s\in (0,\rho k)\\
\beta_3^\rho(s)\equiv 1&\quad  \forall s\geq\rho k+1.
\eea
Furthermore, we require $\beta_1$ to be monotone decreasing and $\beta_3^\rho$ to be monotone increasing. For $\rho=0$, we choose $\beta_2^0\equiv 0$ and $\beta_1=\beta$, where $\beta$ is the function used above for the radial homotopy and $\beta_i$ depend smoothly on $\rho$. Furthermore, we require the convergence as $\rho\to 0$ to be a $C^\infty_{loc}$-convergence of $\beta_i^\rho$ to the specified functions $\beta_i^0$.

Now consider the homotopy
\beq
H_s^\rho(x,r)=\beta_1(s)h(r)+\beta_2^\rho(s)h\left(\frac{r}{f(x)}\right)+\beta_3^\rho(s)h\left(\frac{r}{R_0}\right).
\eeq
The pinching condition \eqref{eqn:pinching homotopy} implies that with this choice, we have
\begin{equation}
\frac{\partial H_s^\rho}{\partial s}\leq 0.
\end{equation}
Therefore, the action estimate
\beq
E(u)\leq \A\bigl(u(+\infty)\bigr)-\A\bigl(u(-\infty)\bigr)
\eeq
holds for all solutions to the Floer equation
\beq
\label{eqn:Floer_eqn_H_s} 
\partial_su+J(u)\big(\partial_tu-X^\rho_s(u)\big)=0
\eeq
with finite energy, where $X_s^\rho$ is now the Hamiltonian vector field of $H_s^\rho$.

\section{Proof of Theorem \ref{Main Theorem}}

Now we are in a position to prove our main theorem. The idea is inspired by previous work of two of the authors in \cite{AH}, which in turn is based on the first authors work in \cite{Albers_Momin_Cup_length_estimates_for_leafwise_intersections}.
If the Hamiltonian vector field of $h\big(\frac{r}{f(x)}\big)$ has infinitely many periodic orbits then the Reeb vector field of $\alpha_f$ necessarily has infinitely many simply periodic orbits, simply because $h'$ is bounded. Therefore, from now on we assume that the Hamiltonian vector field of $h\big(\frac{r}{f(x)}\big)$ has finitely many periodic orbits.

The main difference with \cite{AH} is that we do not assume that the symplectic manifold $(E,\Om)$ has any kind of nice behavior concerning bubbling off of holomorphic sphere. In fact, even though $(M,\om)$ satisfies $[\om]\in\H^2(M;\Z)$ the manifold $(E,\Om)$ is not necessarily semi-positive. We will rule out bubbling-off of holomorphic spheres by a simple energy argument instead. The non-compactness of $E$ poses no problem, since it is convex at infinity.
Furthermore, we need some additional marking structures to make sure that our moduli spaces have the correct dimension.

\subsection{Defining the moduli spaces}

We first explain the philosophy behind these technical constructions below. The main idea is to vary the moduli space studied in Section \ref{sec:initial_moduli_space} with a parameter $\rho$ and find the desired periodic orbits by enforcing breaking in certain limits. There are two issues that require technical solutions: the first one is that the Reeb flow is autonomous and we need to break the $S^1$-symmetry. To do this we fix finitely many parametrizations of the periodic orbits of $\alpha_f$. This is done by marking the angle coordinate of $\gamma(0)$ in polar coordinates in the fiber over $\wp\big(\gamma(0)\big)$ where $\gamma$ is such a periodic orbit. 


The second issue is that we want to connect this moduli space of Floer cylinders with a Morse-theoretic picture of the cup-product and then use the parameter $\rho$ to define chain homotopy equivalent homology operations. For this we want to study cylinders over intersections of stable and unstable manifolds of critical points for certain Morse functions. The marking condition on the Reeb orbits above needs to be translated into a marking condition for certain points on the Floer cylinders which move with varying $\rho$, but stay above the stable and unstable manifolds. As we want to consistently keep the markings we consider trivializations of the bundle $E$ over the stable and unstable manifolds. Of course, these trivializations need to be compatible with trivializations over the periodic Reeb orbits.

Now we describe the precise setup. The first step is to trivialize the bundle over each of the projections of the finitely many Reeb orbits. Now, we choose a generic $\theta_0\in S^1$ such that all periodic Reeb orbits meet the ray $\R_+\cdot \theta_0$ (in the chosen trivialization) in only finitely many points. This is possible for a generic choice of $\theta_0$. We are interested in the projections of such points to $M$ and denote the collection of these points from all Reeb orbits by $p_1, \ldots, p_\nu\in M$.

Choose generic Morse functions $f_\ast, f_1,\ldots, f_k$ on $M$ such that there are critical points $x_\ast^\pm\in \Crit f_\ast$ of $f_\ast$ and $x_i\in \Crit f_i$ of $f_i$ for $i=1,\ldots, k$ corresponding to cohomology classes whose cup-product is non-zero. In particular, $k\leq\operatorname{cuplength}(M;\Z/2)$. We assume, from now on that $k=\operatorname{cuplength}(M;\Z/2)$ even so everything works for $k\leq\operatorname{cuplength}(M;\Z/2)$.
We refer to \cite{Schwarz_Morse_homology} for details on the Morse theoretic cup-product and note here only that being non-zero implies the stable manifolds of $x_1, \ldots, x_k$ have non-empty intersection, i.e., there are Morse trajectories $\eta_i$ converging to $x_i$ such that all $\eta_i(0)$ agree and $\eta_i(0)\in W^u(x_\ast^-,f_\ast)\cap W^s(x_\ast^+,f_\ast)$. Denote the gradient flow lines from $\eta_i(0)$ to $x_\ast^\pm$ in positive and negative direction by $\eta_\pm$. We call this a bouquet of gradient flow lines, see Figure \ref{fig:Moduli_rho=0}.

%

We now start building the moduli space we want to study and add some generic conditions for the functions $f_i,f_\ast$. The first step is to consider
\beq
\widehat\Ma:=\left\{(\rho,u)\,\left|\; 
\begin{aligned}
\rho\geq 0,\, u=(\gamma,F) \text{ solves \eqref{eqn:Floer_eqn_H_s}} \\
F(-\infty)=A,  F(+\infty)=R_0B\\
\Om\big(u(-\infty)\#u\#\bar u(+\infty)\big)=0
\end{aligned}
\right.\right\} .
\eeq
Here again we use the convention that the periodic orbits $u(\pm\infty)$ are capped by their fiber disk. The bar in $\bar u(+\infty)$ indicates that the orientation of the fiber disk is reversed. At the boundary of $\widehat\Ma$, i.e.~for $\rho=0$, we have
\beq
\partial\widehat\Ma:=\widehat\Ma|_{\rho=0}=\M\cong\Sigma,
\eeq
where $\M$ is the moduli space studied above, see \eqref{eqn:moduli_space_continuation_traj} and Theorem \ref{thm:moduli space}. Indeed, for $\rho=0$ the Hamiltonian $H_s^0$ agrees with $h_s$ from above, where we have established Fredholm regularity for this moduli space.

We now add the bouquet of gradient flow lines to the picture. Roughly speaking the idea is to single out elements $u$ in $\widehat\Ma$ which lie over the bouquet in a prescribed manner. The Morse bouquet is an intersection of stable and unstable manifolds 
\begin{equation}
\label{eqn: Bouquet intersection}
W^s(x_*^+,f_*)\cap W^u(x_\ast^-, f_\ast)\cap W^s(x_1,f_1)\cap\cdots\cap W^s(x_k,f_k)
\end{equation}
and consists of a finite number of points (in fact, an odd number). Since stable and unstable manifolds are contractible the $\C$-bundle $E$ resp.~$S^1$-bundle $\Sigma$ is trivial over each of these manifolds. We fix trivializations over each of the above stable/unstable manifolds such that over the finitely many points in $W^s(x_*^+,f_*)\cap W^u(x_\ast^-, f_\ast)\cap W^s(x_1,f_1)\cap\cdots\cap W^s(x_k,f_k)$ all trivializations agree. This is possible since there is no obstruction to extending a trivialization of an $S^1$-bundle over finitely many points to a 1-dimensional CW complex. Of course, in general the trivializations over the various stable/unstable manifolds only match up precisely at the finitely many intersection points, the Morse bouquets.
Note that generically, the stable manifolds $W^s(x_i,f_i)$ do not meet the points $p_1,\ldots, p_\nu$ as the index and therefore the codimension of the stable manifold of $x_i$ is at least one. Furthermore, the stable manifolds meet the projections of the Reeb orbits in finitely many points. We start building the trivializations starting from these finitely many intersection points such that the trivializations at these points agree with the trivialization over the Reeb orbits chosen above.
With these choices of trivializations, we have the following properties:
\begin{itemize}
\item[(A)] \label{a} Whenever $\wp\big(\gamma(0)\big)\in W^s(x_i,f_i)$ for some Reeb orbit $\gamma$ of $\alpha_f$ then the trivializations of $E$ over $\wp(\gamma)$ and over the stable manifold $W^s(x_i,f_i)$ agree at $\wp\big(\gamma(0)\big)$. Moreover, in this trivialization we have $\arg \gamma(0)\neq\theta_0$.
\item[(B)] The intersection \eqref{eqn: Bouquet intersection} consists of finitely many points and the trivializations over the stable and unstable manifolds agree over those points.
\end{itemize}

In fact, talking about stable/unstable manifolds we implicitly chose Riemannian metrics $g_*,g_1,\ldots,g_k$ on $M$. We assume that these metrics are so that all intersections of stable/unstable manifolds are transverse. This is a generic property for the pairs $(f_i,g_i)$ and $(f_\ast,g_\ast)$. Since we assume that the critical points come from a non-vanishing cup-product the above intersection is a manifold of dimension zero and of odd cardinality.


 Now we define the moduli space of interest for the proof as 
\beq\label{eqn:terminal_moduli_space}
\Ma:=\left\{(\rho,u)\in\widehat\Ma\;\left| 
\begin{aligned}
\;\;&\wp\big(u(i\rho,0)\big)\in W^s( x_i, f_i)\;\text{ for } i=1,\ldots, k,\\
&\wp\big(u(0,0)\big)\in W^u( x_\ast^-, f_\ast),\; \wp\Big(u\big((k+1)\rho,0\big)\Big)\in W^s( x_\ast^+, f_\ast),\\ 
&\arg u(i\rho,0)=\theta_0\;\text{ for } i=0,\ldots, k+1\\
\end{aligned}
\right.\right\}.
\eeq
The angle in the last condition is understood as the angle in the trivialization of the bundle over the stable or unstable manifold from the other conditions, i.e., over $W^s( x_i, f_i)$ for $ i=1,\ldots, k$, over $W^u( x_\ast^-, f_\ast)$ for $i=0$ and over $W^s( x_\ast^+, f_\ast)$ for $i=k+1$.

Note that the periodic Reeb orbits we are interested in and that we used above to construct the trivializations correspond to periodic orbits of the Hamiltonian $h_f$ by radial projection. In particular, the projections to $M$ and the angle coordinates in the chosen trivializations agree. Thus from now on we can work in the Hamiltonian setting and still have conditions (A) and (B) for the choices of trivializations and marking.

Let us discuss this first for $\rho=0$.
In this case, $u(0,0)=u(i\rho,0)=u\big((k+1)\rho,0\big)$. Thus,
the conditions  in \eqref{eqn:terminal_moduli_space} are picking out those solutions $u$ to the Floer equation \eqref{eqn:Floer_eqn_H_s} (which actually for $\rho=0$ agrees with \eqref{eqn:s_dep_Floer_eqn}) which are parametrized such that $\arg u(0,0)=\theta_0$. That such a configuration is Fredholm regular is proved below. For $\rho>0$ both the Hamiltonian term in the Floer equation and the incidence conditions start to change, see Figure \ref{fig:Moduli_rho_positive}. 
\begin{figure}
\centering
\begin{tikzpicture}[>=stealth]
\coordinate (A) at (0,7);
\coordinate (B) at (0,9);
\coordinate (C) at (10,7);
\coordinate (D) at (10,9);
\node[below left] (E) at (5,8) {$u(0,0)$};
\coordinate[label=above:$x_\ast^+$ ] (F01) at (1.5,4); 
\coordinate[label=below:$x_\ast^-$ ] (F02) at (5,-2); 
\draw (5,1) to node[midway](f) {} (5,-2); 
\coordinate[label=above:$x_1$ ] (F1) at (3,4); 
\coordinate[label=above:$x_2$ ] (F2) at (4,4); 
\coordinate[label=above:$x_{k-1}$ ] (F3) at (6,4); 
\coordinate[label=above:$x_k$ ] (F4) at (7,4);
\coordinate[label=above:$\cdots$ ] (F5) at (5,3.5);
\draw (A) to (C);
\draw (B) to (D);
\draw[bend left=30] (A) to (B);
\draw[bend right=30, dotted] (A) to (B);
\draw[bend left=30] (C) to (D);
\draw[bend right=30] (C) to (D);
\draw (1.5,4) to node[near start](e) {} (5,1);
\draw (F1) to node[near start](a) {} (5,1);
\draw (F2) to node[near start](b) {} (5,1);
\draw (F3) to node[near start](c) {} (5,1);
\draw (F4) to node[near start](d) {} (5,1);
\draw [->] (5,1) to (a);
\draw [->] (5,1) to (b);
\draw [->] (5,1) to (c);
\draw [->] (5,1) to (d);
\draw [->] (5,1) to (e);
\draw [->] (5,-2) to (f);
\fill (5,1) circle (.08cm);
\fill (5,8) circle (.05cm);
\node[right] (g) at (5,1) {$\wp\bigl(u(0,0)\bigr)$};
\draw[->, very thick] (5,6.5) to (5,4.5);
\node[right] at (5,5.6) {$\wp$};
\end{tikzpicture}
\caption{The moduli space at $\rho=0$} \label{fig:Moduli_rho=0}
\end{figure}
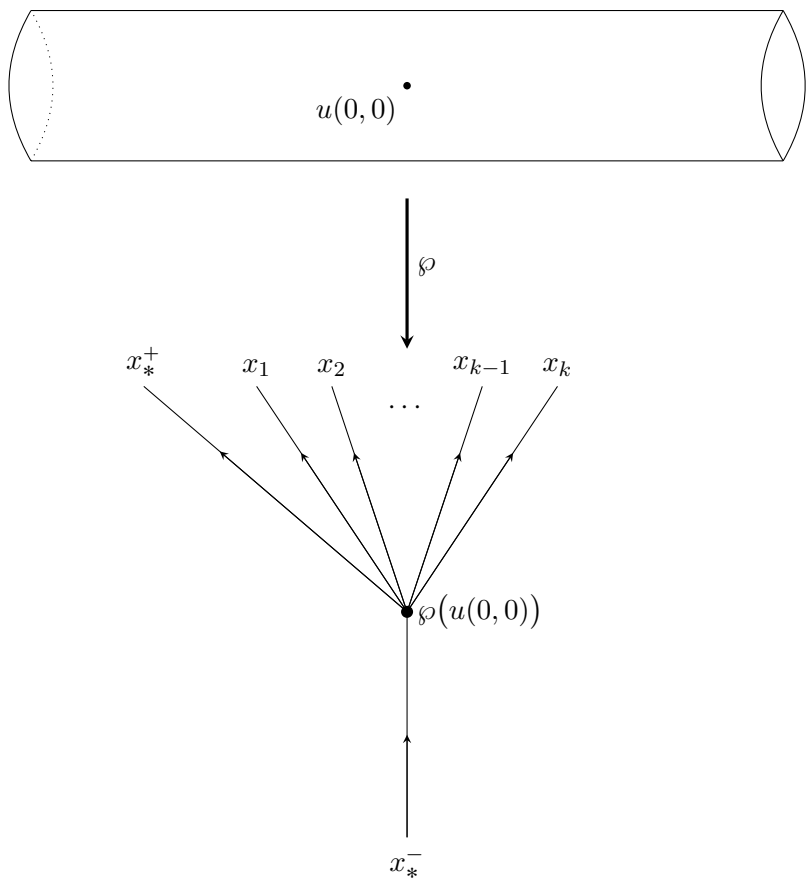

\begin{figure}
\begin{center}
  \begin{tikzpicture}
\coordinate (A) at (-1.5,7);
\coordinate (B) at (-1.5,9);
\coordinate (C) at (12,7);
\coordinate (D) at (12,9);
\coordinate (E) at (5,1);
\draw (A) to (C);
\draw (B) to (D);
\draw[bend left=30] (A) to (B);
\draw[bend right=30, dotted] (A) to (B);
\draw[bend left=30] (C) to (D);
\draw[bend right=30] (C) to (D);
\node at (5,8) {$u(s,t)$};
\draw[->, very thick] (5,6.5) to (5,4.5);
\node[right] at (5,5.6) {$\wp$};
\coordinate[label=above:$x_\ast^+$ ] (F01) at (0,4); 
\coordinate[label=above:$x_1$ ] (F1) at (2,4); 
\coordinate[label=above:$x_2$ ] (F2) at (4,4); 
\coordinate[label=above:$x_k$ ] (F4) at (8,4);
\coordinate[label=above:$\cdots$ ] (F5) at (6,3.5);
\coordinate[label=below:$x_\ast^-$ ] (F02) at (10,-2);
\draw (10,1) to node[midway](a) {} (10,-2); 
\draw (0,4) to node[near start](b) {} (0,1);
\draw (F1) to node[near start](c) {} (2,1);
\draw (F2) to node[near start](d) {} (4,1);
\draw (F4) to node[near start](e) {} (8,1);
\draw [->] (10,-2) to (a);
\draw [->] (0,1) to (b);
\draw [->] (2,1) to (c);
\draw [->] (4,1) to (d);
\draw [->] (8,1) to (e);
\draw (0,1) to (10,1);
\fill (0,1) circle (.05cm);
\fill (2,1) circle (.05cm);
\fill (4,1) circle (.05cm);
\fill (8,1) circle (.05cm);
\fill (10,1) circle (.05cm);
\node[below] (G1) at (2,1) {$\wp\bigl(u(\rho,0)\bigr)$};
\node[below] (G2) at (4,1) {$\wp\bigl(u(2\rho,0)\bigr)$};
\node[below] () at (0,1) {$\wp\bigl(u(0,0)\bigr)$};
\node[below] (G4) at (8,1) {$\wp\bigl(u(k\rho,0)\bigr)$};
\node[above] (E02) at (10,1) {$\wp\bigl(u((k+1)\rho,0)\bigr)$};
\end{tikzpicture}
\caption{The moduli space at $\rho>0$} \label{fig:Moduli_rho_positive}
\end{center}
\end{figure}

\begin{Prop}
The moduli space $\partial\Ma=\Ma|_{\rho=0}$ is Fredholm regular and consists of an odd number of points.
\end{Prop}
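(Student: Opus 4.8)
The plan is to reduce the Fredholm regularity of $\partial\Ma=\Ma|_{\rho=0}$ to the Fredholm regularity of $\M$, which was established in Step 4 of Theorem~\ref{thm:moduli space}, together with the transversality of the Morse-theoretic incidence conditions. The moduli space $\partial\Ma$ consists of pairs: a cylinder $u\in\M$ (with $H_s^0=h_s$) together with the constraint that its projection $v=\wp(u)$, which is a constant point $p\in M$, lies on the appropriate intersection of stable/unstable manifolds of the Morse functions $f_\ast,f_1,\ldots,f_k$. First I would observe, as remarked after Figure~\ref{fig:Moduli rho=0}, that at $\rho=0$ the entire cylinder $u$ is contained in a single fiber of $E\to M$, so the incidence conditions $u(i\rho,0)=\eta_i(0)$ all collapse to the single condition that the basepoint $p=\wp(u)$ of the fiber equals the common value $\eta_i(0)\in W^u(x_\ast^-,f_\ast)\cap W^s(x_\ast^+,f_\ast)$. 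Since the Morse functions were chosen generically so that the corresponding cohomology classes have non-zero cup product, the relevant (un)stable manifolds intersect transversally in $M$, and the point $\eta_i(0)$ is a transverse intersection point.

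The linearized operator at an element of $\partial\Ma$ splits along the fiber decomposition $TE=\xi\oplus\R\langle R,\partial_r\rangle$ and the pullback $\wp^*TM$, exactly as in Step 1 and Step 4 of Theorem~\ref{thm:moduli space}. The key point is that the linearized problem decouples: the fiber part of the linearized Floer operator is precisely the operator whose surjectivity was proven in Step 4 (the analysis of $X_1,X_2,X_3$), while the base part is the linearized Cauchy-Riemann operator for a constant map at $p\in M$, which is simply the trivial operator on a finite-dimensional space (the constant sections of $T_pM$), cut down by the Morse incidence conditions. So the plan is: (i) recall from Step 4 that the fiber-direction linearized operator is surjective with the expected kernel (namely the $S^1$-rotation direction plus the normal directions matching the tangent space of $\M\cong\Sigma$); (ii) identify the base-direction contribution with the finite-dimensional linear-algebra problem of intersecting $T_{\eta_i(0)}W^u(x_\ast^-)$ and $T_{\eta_i(0)}W^s(x_\ast^+)$ inside $T_pM$; (iii) conclude that the full linearized operator of $\partial\Ma$ is surjective because it is a direct sum of a surjective operator and a surjective finite-dimensional map, the latter being exactly the transversality hypothesis guaranteeing the cup product is non-zero.

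A technical point I would address carefully is that the cylinder $u$ at $\rho=0$ is a Morse--Bott object: the asymptotic orbits $\gamma_\pm$ sit in critical manifolds diffeomorphic to $\Sigma$, and the Morse functions $f_\ast,f_i$ have been extended $S^1$-invariantly to $E$ with the shape $f_i+q_i$ near $\Sigma\times\{A\}$, so one is really working with a cascade-type or Morse--Bott-with-incidence moduli space. I would therefore phrase regularity as: the linearized Floer operator together with the evaluation maps at the marked points $s=0,\rho,\ldots,(k+1)\rho$ (all equal to $s=0$ when $\rho=0$) and the linearized stable/unstable manifold conditions of $f_\ast,f_i$ form a surjective operator. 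The fiber directions are handled by Step 4; the base directions reduce to the generic choice of Morse data; and the interaction term vanishes because the marked-point evaluations only see the base point $p$, on which the fiber-direction deformations act trivially.

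The main obstacle I expect is bookkeeping the index and the splitting precisely: one must check that the kernel of the fiber-direction operator (which has dimension $\dim\Sigma$, coming from $\M\cong\Sigma$) interacts correctly with the $k+1$ point constraints coming from the Morse bouquet, so that the constrained moduli space $\partial\Ma$ has the expected (zero-dimensional, after the $S^1$-quotient, or appropriately low) dimension and the linearization is genuinely onto. Concretely, the $\dim\Sigma$-dimensional kernel from $\M$ projects to the $\dim M$-dimensional tangent space of $M$ (the $S^1$-direction in $\Sigma$ maps to the Reeb rotation, which is the free $S^1$-action and drops out in the quotient), and on $M=\Sigma/S^1$ the generic Morse cup-product setup makes the iterated evaluation transverse by construction, as in \cite{Albers_Momin_Cup_length_estimates_for_leafwise_intersections, AH}. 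Once the splitting is set up cleanly, regularity is immediate; assembling the splitting is the only real work.
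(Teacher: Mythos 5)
Your proposal assembles the same ingredients the paper's proof uses: the Fredholm regularity of $\M$ and the identification $\M\cong\Sigma$ from Step~4 of Theorem~\ref{thm:moduli space}, together with the collapse of all $k+2$ incidence conditions at $\rho=0$ into the single point constraint $u(0,0)=\eta_i(0)$. Where the two accounts differ is in how they package the transversality of that constraint. The paper observes that the evaluation map $ev\colon\M\to\Sigma\times\{A\}$, $u\mapsto u(0,0)$, is a submersion (indeed the very diffeomorphism realizing $\M\cong\Sigma$), so the point constraint is automatically regularly cut out, and no further decomposition is needed. Your fiber/base splitting of the linearized operator is an unpacking of this one-line observation and reaches the correct conclusion; but step~(ii), which reduces the base direction to the Morse transversality of $W^u(x_\ast^-)\cap W^s(x_\ast^+)$ inside $M$, misidentifies what is being tested at $\rho=0$. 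Since $u(0,0)$ is required to equal the fixed point $\eta_i(0)$, which lies inside that intersection by construction of the Morse bouquet, the incidence condition is a \emph{point} constraint on $\Sigma$, and the relevant transversality is the submersivity of $ev$ onto $\Sigma\times\{A\}$, automatic because $ev$ is a diffeomorphism. The genericity of the Morse data is invoked elsewhere in the argument (to make the cup-product operation nonzero and, later, to separate action values), not for the $\rho=0$ regularity statement itself.
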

\begin{proof}
By Theorem \ref{thm:moduli space}, $\M=\widehat\Ma|_{\rho=0}\cong\Sigma$ consists of Fredholm regular solutions to the Floer equation \eqref{eqn:s_dep_Floer_eqn} and is equivariantly diffeomorphic to $\Sigma$. In addition, the bouquet from Morse gradient flow lines is Fredholm regular by assumption. Moreover, the conditions in the definition of $\Ma$ for $\rho=0$ simplify to
\begin{equation}
\left\{\;\;
\begin{aligned}
&\wp\big(u(0,0)\big)\in W^s(x_*^+,f_*)\cap W^u(x_\ast^-, f_\ast)\cap W^s(x_1,f_1)\cap\cdots\cap W^s(x_k,f_k)\\
&\arg u(0,0)=\theta_0
\end{aligned}
\right.
\end{equation}
In other words, these conditions single out precisely one solution over each intersection point of the Morse bouquet. So, the full Fredholm problem is Fredholm regular and of index 0 as claimed and $\partial \Ma$ consists of an odd number of points.
\end{proof}

In order to obtain the Reeb orbits claimed in the main Theorem we want to force breaking of Floer trajectories in certain limits. For this, we need to show the following

\begin{Prop}
\label{prop:compact}
There exists a sequence $(\rho_n,u_n)\in\Ma$ such that $\rho_n\to\infty$.
\end{Prop}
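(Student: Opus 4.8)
The plan is to argue by contradiction: suppose that $\rho$ is uniformly bounded on $\Ma$, say $\rho\leq\rho_0$ for all $(\rho,u)\in\Ma$. Then $\Ma$ is contained in $\widehat\Ma|_{[0,\rho_0]}$, and I want to derive a contradiction by showing that $\Ma$, if it has compact closure, must give rise to a nonzero count of Floer trajectories that is incompatible with the Morse-theoretic cup-product data. Concretely, the idea from \cite{AH,Albers_Momin_Cup_length_estimates_for_leafwise_intersections} is that $\Ma$ is a manifold (of dimension determined by the Fredholm index plus the $\rho$-parameter minus the codimension of the $k+2$ point constraints) whose boundary at $\rho=0$ is exactly the single regular point identified in the previous Proposition. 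A one-dimensional cobordism with exactly one boundary point is impossible, so $\Ma$ cannot be compact; since the only source of non-compactness left (after ruling out the ones below) is $\rho\to\infty$, this produces the desired sequence.

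\textbf{Key steps.} First I would pin down the expected dimension of $\Ma$: the cylinders in $\widehat\Ma$ form, for each fixed $\rho$, a space modeled on $\Sigma$ (by the analysis leading to Theorem \ref{thm:moduli space}, extended to the homotopy $H_s^\rho$), so $\widehat\Ma$ is $(2n)$-dimensional including the $\rho$-direction; imposing the $k+2$ incidence conditions $u(0,0)\in W^u(x_\ast^-)$, $u((k+1)\rho,0)\in W^s(x_\ast^+)$, and $u(i\rho,0)=\eta_i(0)$ cuts down by $\dim M - \operatorname{ind} W^u(x_\ast^-) + \ldots$, and by the choice of critical points realizing a nonzero cup-product of total degree $2n$ this leaves a $1$-dimensional moduli space. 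Second, I would establish Fredholm regularity for $\rho>0$ for generic choices (standard transversality for the parametrized Floer problem together with the Morse bouquet), so that $\Ma$ is genuinely a $1$-manifold with boundary. Third — and this is the analytic heart — I must control compactness: rule out (a) bubbling of holomorphic spheres, using the energy argument flagged in the text (the energy of any solution is bounded by $\A(u(+\infty))-\A(u(-\infty))$ via the monotone homotopy, and a sphere would carry a definite quantum of energy $\geq$ the minimal positive value of $\om$ on $\pi_2(M)$, exceeding the available budget $c(B-A)<1$); (b) breaking of Floer cylinders at finite $\rho$, using that the action window $I=[A,A+c(B-A)]$ is so narrow that no intermediate periodic orbit of $H_s^\rho$ has action in the interior of $I$ — here one invokes exactly the carefully arranged conditions in \eqref{eqn:properties_of_h} guaranteeing $\A\notin[A,A+c(B-A))+\Z$ for all the "spurious" orbits; (c) breaking of Morse gradient trajectories, which is handled by the standard cup-product compactness in \cite{Schwarz_Morse_homology} together with the assumption (made at the start of Section 3) that there are only finitely many periodic orbits. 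Fourth, having excluded (a)–(c), the only remaining non-compactness of $\Ma$ is $\rho\to\infty$; but a $1$-manifold with a single boundary point (the regular point at $\rho=0$) and no other boundary must be non-compact precisely in this direction, yielding a sequence $(\rho_n,u_n)\in\Ma$ with $\rho_n\to\infty$.

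\textbf{Main obstacle.} The hard part will be step three, specifically ruling out the breaking of Floer cylinders at finite $\rho$ and the associated possibility that the non-radial homotopy $H_s^\rho$ creates new $1$-periodic orbits with action landing inside $I$. For the strictly radial homotopy this was handled by the explicit action computations of Section 2, but $H_s^\rho(x,r)=\beta_1(s)h(r)+\beta_2^\rho(s)h(r/f(x))+\beta_3^\rho(s)h(r/R_0)$ genuinely depends on the base point $x\in\Sigma$ through $f$, so the clean fiberwise reduction (Step 1 of Theorem \ref{thm:moduli space}, where $v=\wp(u)$ was forced to be constant by a vanishing-energy argument) no longer applies verbatim — $\wp_*X_s^\rho$ need not vanish. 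I expect one still gets an energy bound forcing $v=\wp(u)$ to be "almost constant," but extracting from the pinching inequality \eqref{eqn:pinching homotopy} that the action of any broken-off piece cannot sit in the open interval $(A,A+c(B-A))$ modulo $\Z$ — and that breaking at finite $\rho$ is therefore excluded — is the delicate point; it is where the precise bound $R_2/R_1<\sqrt 2$, equivalently $1\leq f\leq R_0<2$, is really used, together with the curvature bound $|rh''_s(r)|<1$ to keep the Salamon--Zehnder-type rigidity argument alive along the homotopy.
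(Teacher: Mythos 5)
Your proposal matches the paper's proof in overall structure: argue by contradiction assuming $\rho$ is bounded on $\Ma$, establish compactness of $\Ma$ (ruling out sphere bubbling by the energy bound $E(u)\leq \A(\gamma_+)-\A(\gamma_-)<1$ plus integer quantization of $\Om$ on $\pi_2$, and ruling out Floer breaking by the narrow action window $[A,\,A+c(B-A)]$ in which the only orbits of $H$ and $L$ are the asymptotic ones $\gamma_\pm$), then contradict the existence of a compact $1$-manifold whose boundary consists solely of the $\rho=0$ end, which is regular and non-empty because the cup product is non-zero.

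Two things are worth correcting, though. First, your ``Main obstacle'' is largely a non-issue \emph{for this proposition}. You are right that the fiberwise reduction of Theorem \ref{thm:moduli space} does not apply to $H_s^\rho$ (since $\wp_*X_s^\rho\neq 0$ when $f$ is non-constant) and that $h\bigl(\tfrac{r}{f(x)}\bigr)$ genuinely has $1$-periodic orbits with action near $I$ -- these are precisely the Reeb orbits the theorem is after. But for $\rho_n\to\rho^*<\infty$ the autonomous plateau $s\in[0,\rho_n k]$ where $H_s^{\rho_n}=h(r/f(x))$ stays \emph{bounded} in $s$, and $C^\infty_{\textrm{loc}}$-convergence is uniform on compact $s$-intervals, so no energy can escape there. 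Breaking can only occur at $s\to\pm\infty$, where $H_s^\rho$ equals $H$ or $L$ and the explicit action computations of Section 2 apply. That is exactly what the paper means by ``breaking is only possible near the ends.'' The intermediate $h_f$-orbits become relevant only when $\rho\to\infty$, which is the \emph{conclusion}, not an obstacle. No Salamon--Zehnder rigidity or pinching bound is needed here.

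Second, ``standard transversality for the parametrized Floer problem'' is not available: all the Hamiltonians involved are $t$-independent, so solutions come in $S^1$-families and generic choices of $J$ (or time-dependent Hamiltonian perturbations, which would destroy the setup) cannot cut out a manifold. The paper instead invokes an abstract perturbation scheme (the AH argument, cf.\ the HWZ polyfold framework) producing a compact $1$-manifold $\widetilde\Ma$ agreeing with $\Ma$ near $\rho=0$ where regularity was verified by hand in Step 4 of Theorem \ref{thm:moduli space}. This substitution matters, not merely for technical hygiene but because without it the cobordism argument has no manifold to run on. (A small slip: $\widehat\Ma$ including the $\rho$-direction has dimension $2n+2$, not $2n$, since each fiber is $\cong\Sigma$ which is $(2n+1)$-dimensional; this does not affect the logic, as what matters is that the constrained moduli space $\Ma$ has expected dimension $1$.)
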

\begin{proof}
Assume that this is not the case and
for all sequences $\bigl\{(\rho_n,u_n)\bigr\}_{n\in\N}$ in $\Ma$, the parameter $\rho$ stays bounded, i.e., we have
	\[
		\sup_{n\in\N}\rho_n<\infty.
	\] 
We show that in this case, the moduli space is compact and how this leads to a contradiction.
	Let $\bigl\{(\rho_n,u_n)\bigr\}_{n\in\N}$ be a sequence in $\Ma$. Possibly by passing to a subsequence, we can assume $\rho_n$ converges to $\rho^*$.
	We would like to apply a result by Schwarz in \cite[Proposition 4.3.11]{Schwarz_PhD} stating that convergence of Floer trajectories in $C^\infty_{loc}$ without breaking or bubbling already implies convergence in $H^{1,p}$.
	We already have the $C^{\infty}_{\textrm{loc}}$-convergence and thus need only to show that there is no bubbling nor breaking.
	
	Since $\rho_n\to\rho^*$, breaking is only possible near the ``ends'' of the cylinder.
	There are two possibilities, breaking at $+\infty$ or at $-\infty$.
	At $-\infty$, we have to break on a critical point for $h(r)$ with action in $[A\,,\,c(B-A)+A]$ since the homotopy of Hamiltonian functions is monotone.
	This is impossible as the only such point is the asymptotic orbit $\gamma_-$. The argument excluding breaking at $+\infty$ is the same using the properties of $l(r)$ and the orbit $\gamma_+$.
	
Bubbling is prevented since the energy of all elements $(\rho,u)\in\M$ curve is less than $1$. Indeed, $E(u)\leq\A(\gamma_+)-\A(\gamma_-)<1$ by construction. Therefore there is not enough energy for bubbling-off of holomorphic spheres since on $\pi_2(E)\cong\pi_2(M)$ we have $\Om\big(\pi_2(E)\big)=\om\big(\pi_2(M)\big)\subset\Z$ due to the assumption $[\om]\in\H^2(M,\Z)$. Moreover, as $E$ is convex at infinity the sequence $(u_n)$ does not escape to infinity in $E$. Indeed, we recall that $h(r)$ is constant for $r$ large and therefore all solution $u_n$ become holomorphic near infinity.

This shows that under the assumption that $\rho$ stays bounded for all sequences in $\Ma$, the moduli space $\Ma$ is compact.

By construction, the parametrized moduli space $\Ma$ has only one boundary component $\partial\Ma=\Ma|_{\rho=0}=\M$ which, as shown above, is Fredholm regular.
By compactness, $\Ma$ is still Fredholm regular for small values of $\rho$. Using an abstract perturbation argument as in \cite{AH}, see also \cite[Theorems 5.5 and 5.13]{HWZ_Polyfold_and_Fredholm_Theory_I_Basic_theory_in_M_polyfolds}, we can define a perturbed moduli space $\widetilde{\Ma}$, which is a smooth, 1-dimensional compact manifold and agrees with $\Ma$ near $\rho=0$, where $\Ma$ is already Fredholm regular.

In particular, then $\widetilde{\Ma}$ is a compact, 1-dimensional manifold with only one boundary component. As this cannot exist, we have shown that the assumption at the beginning of this proof is wrong and there exists a sequence $\bigl\{(\rho_n,u_n)\bigr\}_{n\in\N}$ of elements in $\Ma$ with $\rho_n\to\infty$.
\end{proof}

As the last step in this section, we also define moduli spaces for bounded values of $\rho$.
Namely, we set
\[
\Ma_\rho(x_1,\ldots, x_k, x_\ast^-;x_\ast^+)=\left\{u\mid (u,\rho)\in\Ma\right\}
\]
and
\[
\Ma[0,\rho]=\left\{u\mid (u,\sigma)\in\Ma\ \forall\ \sigma\in[0,\rho]\right\}.
\]
As in \cite{AH}, also these moduli spaces can be perturbed to be smooth compact manifolds $\widetilde\Ma_\rho$ for $\rho\in\N$ by an abstract perturbation argument, cf. \cite[Theorems 5.5 and 5.13]{HWZ_Polyfold_and_Fredholm_Theory_I_Basic_theory_in_M_polyfolds}. Moreover, as described above, $\Ma_0=\partial\Ma=\M$ is already Fredholm regular and the perturbations can be done leaving $\Ma_\rho$ untouched for small $\rho$. Then we can also perturb the moduli spaces $\Ma[0,\rho]$ for $\rho\in\N$ keeping the ends fixed to get smooth manifolds $\widetilde\Ma[0,\rho]$.

\subsection{Finding critical points of the action functional}

The next step is to use the above moduli spaces to construct cohomology operations.
It is rather standard, cf. \cite{AH, Schwarz_Morse_homology}, that the projection of $\Ma_0$ to $M$ defines the cup product on $M$ by
\bea
\theta_0\colon\CM^\ast(f_1)\otimes\ldots\otimes\CM^\ast(f_k)\otimes\CM_\ast(f_\ast)&\to\CM_\ast(f_\ast)\\
x_1\otimes\ldots\otimes x_k\otimes x_\ast^-&\mapsto\sum_{x_\ast^+\in\Crit f_\ast}\#_2pr_M\widetilde\Ma_0(x_1,\ldots,x_k,x_\ast^-;x_\ast^+)\cdot x_\ast^+.
\eea
Here, we use Morse homology and cohomology with coefficients in $\Z/2\Z$. Observe that the functions $f_i$ and $f_\ast$ are defined on $M$ and for $\rho=0$, the projection is a standard Morse bouquet as the cylinder projects to a point.
Thus all homology and cohomology groups above can be identified with the singular homology and cohomology of $M$. Then the left hand side corresponds to the homology class
\begin{equation}
\big([x_1]\cup\ldots\cup[x_k]\big)\cap [x_\ast^-].
\end{equation}
Furthermore, as $k=\CL(M)$, we can choose generic Morse functions such that there are critical points $x_\ast^\pm$ and $x_1,\ldots, x_k$ such that this product is non-zero. In particular, this shows that the moduli space $pr_M\widetilde\Ma_0(x_1,\ldots,x_k,x_\ast^-,x_\ast^+)$ is nonempty and therefore, we also have
\begin{equation}
\widetilde\Ma_0(x_1,\ldots,x_k,x_\ast^-;x_\ast^+)\neq\emptyset.
\end{equation}
As the next step, we define cohomology operations depending on $\rho\in\N$ by
\bea
\theta_\rho\colon\CM^\ast(f_1)\otimes\ldots\otimes\CM^\ast(f_k)\otimes\CM_\ast(f_\ast)&\to\CM_\ast(f_\ast)\\
x_1\otimes\ldots\otimes x_k\otimes x_\ast^-&\mapsto\sum_{x_\ast^+\in\Crit(f_\ast)}\#_2\widetilde\Ma_\rho(x_1,\ldots,x_k,x_\ast^-;x_\ast^+)\cdot x_\ast^+.
\eea
As in \cite{AH, Albers_Momin_Cup_length_estimates_for_leafwise_intersections}, these operations are chain homotopy equivalent to $\theta_0$ using the moduli spaces $\widetilde\Ma[0,\rho]$ to define the chain homotopy. In particular, this shows that for all $n\in\N$, there are generic Morse functions $f_i$ and $f_\ast$, possibly depending on $n$, with critical points $x_i$ and $x_\ast^\pm$ such that 
\begin{equation}
\widetilde\Ma_n(x_1,\ldots,x_k,x_\ast^-;x_\ast^+)\neq\emptyset.
\end{equation}
This implies that also
\begin{equation}
\Ma_n(x_1,\ldots,x_k,x_\ast^-;x_\ast^+)\neq\emptyset
\end{equation}
as otherwise also a small perturbation of $\Ma_n(x_1,\ldots,x_k,x_\ast^-;x_\ast^+)$ would be empty, too, and therefore the cohomology operations would vanish.

We now run the $C^\infty_{\textrm{loc}}$ compactness $k$ times by centering ourselves at each $l\rho_n$ for $l=1,\ldots,k$ where $(\rho_n,u_n)$ is a sequence guaranteed by Proposition \ref{prop:compact}. This means that we choose $u_n\in\Ma_n(x_1,\ldots,x_k,x_\ast^-;x_\ast^+)$ and consider the sequences
\begin{equation}
u_{n,l}(s,t)=u_n(s+nl,t).
\end{equation}
As in \cite{AH, Albers_Momin_Cup_length_estimates_for_leafwise_intersections}, these sequences converge to a broken Floer trajectory for $n\to\infty$ and we find $k+1$ critical points $(\gamma_i,\overline{\gamma}_i)$ of $\A_{h_f}$ and $(\gamma_-,\bar\gamma_-)$ of $\A_H$ and $(\gamma_+,\bar\gamma_+)$ of $\A_L$ such that
\begin{equation}
\A_H(\gamma_-,\overline{\gamma}_-)\leq\A_{h_f}(\gamma_1,\overline{\gamma}_1)\leq \cdots\leq\A_{h_f}(\gamma_i,\overline{\gamma}_i)\leq\cdots\leq\A_{h_f}(\gamma_{k+1},\overline{\gamma}_{k+1})\leq\A_L(\gamma_+,\overline{\gamma}_+)\;,
\end{equation}
where $\A_{h_f}$ is the action functional for the
 Hamiltonian $h_f(r,x)=h\big(\frac{r}{f(x)}\big)$ describing the Reeb flow of the contact form $\alpha_f$ on $\Sigma$ by Lemma \ref{lem:Ham=Reeb}.

Moreover, we know that $\A_H(\gamma_-,\overline{\gamma}_-)=A$ and $\A_L(\gamma_+,\overline{\gamma}_+)
=c(B-A)+A$, see \eqref{eqn:moduli_problem}. We now claim that most inequalities are strict by generic choice of Morse functions.
Indeed, if there were an equality, the corresponding Floer trajectory would have zero energy and thus be independent of the $s$-coordinate, i.e.~equal to a Reeb orbit $\gamma$ of $\alpha_f$. Then the coincidence condition in the definition of the moduli space $\Ma$ shows that we must have $\wp\big(\gamma(0)\big)\in W^s( x_j, f_j)$ and that $\arg\gamma(0)=\theta_0$ in the trivialization over $W^s( x_j, f_j)$. This contradicts condition (A) in the construction of our trivializations and generic Morse functions. Therefore, we conclude
\[
	\A_{h_f}(\gamma_i,\overline{\gamma}_i)<\A_{h_f}(\gamma_{i+1},\overline{\gamma}_{i+1})\quad\forall i=1,\ldots,k\;.
\]
A remaining possibility is that $\gamma_i=\gamma_{i+1}$ but this does not occur ($\overline{\gamma}_{i}\neq\overline{\gamma}_{i+1}$). Indeed, this is ruled out as follows. We compute
\begin{equation}
\A_{h_f}(\gamma_{i+1},\overline{\gamma}_{i+1})-\A_{h_f}(\gamma_i,\overline{\gamma}_i)=\Om(\overline{\gamma}_{i+1}\#(-\overline{\gamma}_{i}))\in\Z\setminus\{0\}
\end{equation}
since $\Om=\om:\pi_2(E)\cong\pi_2(M)\to\Z$ since $[\om]\in\H^2(M,\Z)$. On the other hand we know that 
\begin{equation}
\begin{aligned}
\A_{h_f}(\gamma_{i+1},\overline{\gamma}_{i+1})-\A_{h_f}(\gamma_i,\overline{\gamma}_i)
&\leq\A_L(\gamma_+,\overline{\gamma}_+)-\A_H(\gamma_-,\overline{\gamma}_{-})\\
&=c(B-A)<1\;.
\end{aligned}
\end{equation}
by equations \eqref{small_value} and \eqref{eqn:moduli_problem}. This contradiction shows that $\gamma_i\neq\gamma_{i+1}$. 

%
%

\section{Proofs of Corollaries}

In this section, we finally prove the statements about Reeb dynamics implied by the Theorem. 
In detail, Corollary \ref{cor:multiplicity} is an immediate consequence of Lemma \ref{lem:Ham=Reeb}. In the the special case of starshaped hypersurfaces in $\R^{2n}$, known bounds for the lengths of periodic Reeb orbits yield Corollary \ref{thmintro:el}. We prove both corollaries for completeness, even though the key points of the proofs are known facts in contact dynamics.

\subsection{Periodic Reeb orbits}

To prove Corollary  \ref{cor:multiplicity}, we need to show that the action bound given by the pinching condition excludes multiplicities in the absence of a short orbit. 
This follows from Lemma \ref{lem:Ham=Reeb} as follows:
Assume that one of the orbits $\gamma_{i}$ found in the theorem above corresponds to a Reeb orbit $z_{i}$ which is an $m$-fold cover, $m\geq 2$, of $z_1$ (the Reeb orbit corresponding to $\gamma_1$.)
According to Lemma \ref{lem:Ham=Reeb}, the corresponding periods are $h'(\bar{c}_{i})$ for $z_{i}$ and $h'(\bar{c}_1)$ for $z_1$, where the constants $\bar{c}_1$ and $\bar{c}_i$ are determined by Lemma \ref{lem:Ham=Reeb}. 
The orbit $z_{i}$ being an $m$-fold cover of $z_1$ translates into
\[
	h'(\bar{c}_{i})=mh'(\bar{c}_{1}).
\]
By construction of the function $h$, namely by second property in \eqref{eqn:properties_of_h}, we obtain 
\[
h'(\bar{c}_{1})<\frac{2}{m}\leq1.
\]
Thus we found a Reeb orbit of $\alpha_f$ with period less than 1.
To get the statement of Corollary \ref{cor:multiplicity}, recall that we normalized the radius by rescaling such that $\pi R_1^2=1$. Dropping this normalization gives the period bound in Corollary \ref{cor:multiplicity} and completes the proof.

%
%
%
%
%

\subsection{Starshaped hypersurfaces in $\R^{2n}$}

In this last part, we study the particular case of $\R^{2n}$, where we have concrete bounds for the length of Reeb orbits on starshaped hypersurfaces.
Note that there is a change in notation in this section to match the ``standard'' notation used for this theorem in the literature. In particular, we do not need to use the language of the line bundle over the symplectic manifold $M$ in this setting and for a given starshaped hypersurface, we do not use the defining function $f$ any more.

Therefore, $\Sigma$ will now denote the starshaped hypersurface of interest, which was denoted by $\Sigma_f$ above. Similarly, we now denote by $\alpha$ the usual contact form on $\Sigma$. In computations, we let $\alpha_x$ denote the form at the point $x\in\Sigma$.
We reprove below the relation between the largest radius $R_1$ of a sphere contained in $\Sigma$ and the action of periodic Reeb orbits on $\Sigma$. Together with Corollary \ref{cor:multiplicity}, this yields the desired Corollary \ref{thmintro:el}.


\begin{Lemma}\label{lemma:action}
Let $\gamma : [0,T] \rightarrow \Sigma$ be a simple $T$-periodic Reeb orbit on $\Sigma\subset\R^{2n}$ such that the largest sphere contained in the domain bounded by $\Sigma$ has radius $R_1$.
Assume moreover that for all $x \in \Sigma$, we have $T_{x}\Sigma \cap B_{R_{1}}(x_{0}) = \emptyset$.
Then we have 
\[
T \geq \pi R_{1}^{2}.
\]
\end{Lemma}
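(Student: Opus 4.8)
The goal is a lower bound on the period $T$ of a simple closed Reeb orbit $\gamma$ on a starshaped hypersurface $\Sigma\subset\R^{2n}$, under the geometric hypothesis that every tangent hyperplane $T_x\Sigma$ misses the ball $B_{R_1}(x_0)$. Without loss of generality I would translate so that $x_0=0$. The strategy is the classical isoperimetric-type argument relating the action $T=\A_\alpha(\gamma)=\int_\gamma\alpha$ to the symplectic area of a disk bounded by $\gamma$, combined with the geometric input to bound that area from below by $\pi R_1^2$.

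\textbf{Step 1: Action equals enclosed area.} On $\R^{2n}$ with $\omega_0=\sum dx_i\wedge dy_i=d\lambda$, the contact form on the starshaped $\Sigma$ is the restriction of the Liouville form $\lambda=\tfrac12\sum(x_i\,dy_i-y_i\,dx_i)$, and $\alpha(x)=\tfrac12\langle Jx,\cdot\rangle$ in the obvious notation. Hence
\[
T=\int_0^T\alpha_{\gamma(t)}(\dot\gamma(t))\,dt=\int_0^T\tfrac12\langle J\gamma(t),\dot\gamma(t)\rangle\,dt=\int_{D}\omega_0
\]
for any disk $D$ with $\partial D=\gamma$; in particular $T$ is independent of the choice of $D$. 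I would take $D$ to be the "cone" $D=\{s\,\gamma(t)\mid s\in[0,1],\ t\in[0,T]\}$, the radial disk over $\gamma$ with the origin as apex.

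\textbf{Step 2: Bounding the area by the tangency condition.} For the cone disk $D$ a direct computation gives $\int_D\omega_0=\tfrac12\int_0^T\langle J\gamma,\dot\gamma\rangle\,dt$ again, so I instead turn the geometric hypothesis into a pointwise lower bound on $\alpha_{\gamma(t)}(\dot\gamma(t))$. The condition $T_x\Sigma\cap B_{R_1}(0)=\emptyset$ means that the distance from the origin to the affine hyperplane $x+T_x\Sigma$ is $\geq R_1$; writing $\nu(x)$ for the outward unit normal this says $\langle x,\nu(x)\rangle\geq R_1$ for all $x\in\Sigma$. Since the Reeb vector field $R_\alpha(x)$ is a positive multiple of the Hamiltonian direction, and $\dot\gamma=R_\alpha(\gamma)$ lies in $\ker\alpha_x\cap T_x\Sigma=\xi_x$, I would show $\alpha_x(R_\alpha(x))=1$ forces the relevant speed to satisfy a lower bound in terms of $\langle x,\nu(x)\rangle$. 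Concretely, decompose $\dot\gamma$; the key identity is that $\tfrac12\langle J\gamma,\dot\gamma\rangle$ can be estimated using $|\dot\gamma|$, the distance $|\gamma|$, and the angle controlled by the normal condition, yielding $\tfrac12\langle J\gamma(t),\dot\gamma(t)\rangle\geq \tfrac12 R_1\,|\dot\gamma(t)|$ (or a similar clean inequality) pointwise.

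\textbf{Step 3: Close the estimate.} Combining Steps 1 and 2,
\[
T=\int_0^T\tfrac12\langle J\gamma(t),\dot\gamma(t)\rangle\,dt\geq \tfrac12 R_1\int_0^T|\dot\gamma(t)|\,dt=\tfrac12 R_1\cdot\ell(\gamma),
\]
where $\ell(\gamma)$ is the Euclidean length of the orbit. It remains to bound $\ell(\gamma)$ from below by $2\pi R_1$. This follows because $\gamma$ is a \emph{simple} closed curve lying on $\Sigma$ that must "wind around" the ball $B_{R_1}(0)$: the projection of $\gamma$ to the unit sphere, or more directly the fact that $\langle\gamma,\dot\gamma\rangle$ integrates to zero while $\gamma$ stays outside the ball of radius $R_1$, forces $\gamma$ to be a closed loop enclosing a region of the symplectic plane of area at least $\pi R_1^2$. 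The cleanest route: the linking/area argument shows the cone disk $D$ must contain the symplectic $2$-plane disk of radius $R_1$ in a suitable sense, or one invokes that a closed characteristic bounding a disk that meets every complex line through $0$ in area $\geq\pi R_1^2$ — giving directly $T=\int_D\omega_0\geq\pi R_1^2$.

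\textbf{Main obstacle.} The delicate point is Step 2–3: correctly extracting from the tangency hypothesis $\langle x,\nu(x)\rangle\geq R_1$ the pointwise inequality on $\tfrac12\langle J\gamma,\dot\gamma\rangle$ and then combining it with the topological constraint that a simple closed Reeb orbit must enclose symplectic area $\geq\pi R_1^2$. One has to be careful that $\dot\gamma$ is only the Reeb direction (not of controlled Euclidean speed a priori), so the argument must be phrased intrinsically — e.g. by showing the map $t\mapsto\gamma(t)/|\gamma(t)|$ has winding contribution forcing $\int_0^T\tfrac12\langle J\gamma,\dot\gamma\rangle\,dt\geq\pi R_1^2$ — rather than naively using arc length. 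I expect the honest proof to compare with the round sphere $S_{R_1}$: since $\Sigma$ lies outside $B_{R_1}$ and every tangent plane avoids it, a monotonicity/comparison argument (or Stokes applied to the region between $\gamma$'s cone and the sphere) yields the bound, and making that comparison rigorous is the crux.
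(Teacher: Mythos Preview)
Your Step 1 is correct, and Step 2 is essentially right once you observe that the integrand $\tfrac12\langle J\gamma,\dot\gamma\rangle=\alpha_\gamma(R_\alpha)\equiv 1$; so your claimed pointwise inequality is exactly the bound $|\dot\gamma(t)|=|R_\alpha(\gamma(t))|\le 2/R_1$, which follows from writing $R_\alpha=cJ\nu_\Sigma$ and using $1=\alpha(R_\alpha)=\tfrac{c}{2}\langle\nu_\Sigma(x),x\rangle\ge \tfrac{c}{2}R_1$. This matches the paper's geometric input precisely.

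The gap is Step 3. From $T\ge\tfrac12 R_1\,\ell(\gamma)$ you need $\ell(\gamma)\ge 2\pi R_1$, and none of the arguments you sketch establish this. The cone $\{s\gamma(t)\}$ is a two-dimensional surface, so it cannot ``contain'' any disk in a complex line; the winding/linking heuristics do not apply, since an arbitrary simple closed curve lying outside $B_{R_1}$ can be very short, and you have not isolated which special feature of \emph{Reeb} orbits prevents this. In fact nothing in your setup rules out a tiny Reeb loop far from the origin --- the hypothesis $\langle x,\nu\rangle\ge R_1$ bounds the \emph{speed} $|\dot\gamma|$ from above, not the length from below.

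The paper closes the argument differently and avoids any length bound. Writing $\bar\gamma(t)=\gamma(t)-\tfrac1T\int_0^T\gamma$, one has
\[
2T=\int_0^T\langle\dot\gamma,J\bar\gamma\rangle\,dt\le\|\dot\gamma\|_{L^2}\|\bar\gamma\|_{L^2}\le\frac{T}{2\pi}\|\dot\gamma\|_{L^2}^2\le\frac{T}{2\pi}\cdot T\cdot\frac{4}{R_1^2},
\]
where the middle step is Wirtinger's inequality for mean-zero $T$-periodic functions and the last step is exactly your $|\dot\gamma|\le 2/R_1$. This gives $T\ge\pi R_1^2$ immediately. So the missing ingredient in your approach is Wirtinger (equivalently, the $L^2$ Poincar\'e inequality on the circle); once you insert it in place of the unproven length bound, your argument becomes the paper's.
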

\begin{Rmk}
	The assumption that $T_{x}\Sigma \cap B_{R_{1}}(x_{0}) = \emptyset$ for all $x\in\Sigma$ can be reformulated as
	\begin{equation}\label{hyp:rmk}
		\langle \nu_{\Sigma}(z),z\rangle > R_{1} \quad \forall z \in \Sigma
	\end{equation}
	where $\nu_{\Sigma}(z)$ is the exterior normal vector of $\Sigma$ at point $z$ and $\langle\cdot,\cdot\rangle$ denotes the Euclidean scalar product on $\R^{2n}$.
	This condition is weaker than convexity which is also a common condition in similar settings.
\end{Rmk}
\begin{proof}
	We follow a similar argument in \cite{BLMR}.
	Let $\gamma : [0,T] \rightarrow \Sigma$ be a simple periodic Reeb orbit.
	We first compute a bound for $T$ in terms of the Reeb vector field. The main ingredient is the special form  of the contact form which is given as $\alpha_{x}(X_{x}) = \tfrac{1}{2} \langle X_{x}, Jx \rangle$. Writing $\bar{\gamma}(t) := \gamma(t) - \int_{0}^{T} \gamma(t) dt$, we compute
	\begin{align}
		2T & =2 \int_0^T \alpha_{\gamma(t)}\bigl(\dot{\gamma}(t)\bigr)dt  \nonumber\\
		&= \int_{0}^{T} \langle \dot{\gamma}(t), J\gamma(t)\rangle dt \qquad \nonumber\\
		& =  \int_{0}^{T} \langle \dot{\gamma}(t), J\bar{\gamma}(t)\rangle dt \nonumber\\
		& \leq  \| \dot{\gamma}\|_{L^{2}}  \| \bar{\gamma}\|_{L^{2}} \nonumber\\
		& \leq   \| \dot{\gamma}\|_{L^{2}}^{2} \tfrac{T}{2\pi}\nonumber\\
		& =  \frac{T}{2\pi}\int_{0}^{T} \|\dot{\gamma}(t)\|^{2} dt\nonumber \\
		& =  \frac{T}{2\pi}\int_{0}^{T} \|(R_{\alpha})_{\gamma(t)}\|^{2} dt,\label{borneaction}
	\end{align}
where we use Wirtinger's inequality to get the second inequality. For any point $x$ in $\Sigma$, the norm  of the Reeb vector field is bounded by $\|(R_{\alpha})_{x}\| \leq \frac{2}{R_{1}}$. Indeed,  we have
\begin{equation}
\iota(J\nu_{\Sigma})d\alpha(Y)=\omega(J\nu_{\Sigma},Y)=-\langle \nu_{\Sigma},Y\rangle=0
\end{equation}
	 for all $Y\in T\Sigma$.
Therefore, $R_{\alpha}$ is proportional to $J\nu_{\Sigma}$ and we have
$R_{\alpha}=cJ\nu_{\Sigma}$ for some function $c:\Sigma\to\R$. Since $\nu_\Sigma$ is the exterior normal and $J$ is an isometry we have $|c|=\|R_{\alpha}\|$.

On the other hand, we also use the second defining equation for the Reeb vector field to get 
\begin{equation}
1=\alpha_x({R_{\alpha}}_x)=\tfrac{1}{2}\langle c_xJ\nu_{\Sigma}(x),Jx\rangle=\frac{c_x}{2}\langle\nu_{\Sigma}(x),x\rangle
\end{equation}
and therefore, we find $c_x=\frac{2}{\langle\nu_{\Sigma}(x),x\rangle}\leq \frac{2}{R_{1}}$.

This gives rise to an upper bound for the last line in \eqref{borneaction}. Namely, we have
\begin{equation}
\frac{T}{2\pi}\int_{0}^{T} \|(R_{\alpha})_{\gamma(t)}\|^{2} dt\leq \frac{4}{R_{1}^{2}} T \frac{T}{2\pi}
\end{equation}
and in total, we have shown that $2T \leq 2T \frac{T}{\pi R_{1}^{2}}$ which implies the lemma.
\end{proof}

Finally, using this lemma, we can prove Corollary \ref{thmintro:el} to obtain the theorem by Ekeland-Lasry as a cuplength estimate.

\begin{proof}[Proof of Corollary \ref{thmintro:el}]
We now view the $2n-1$ sphere as the boundary of the ball blown-up at the origin. This point of view gives the sphere as a circle bundle in the tautological complex line bundle $\mathcal{O}(-1)$ over $\CP^{n-1}$.
	Note that the Reeb dynamics is unaffected by this consideration.
	Theorem \ref{Main Theorem} gives us the existence of $n$ periodic Reeb orbits on the sphere whose action is ``pinched''
	\[
		\pi R_1^2<\A(\gamma_1)<\hdots<\A(\gamma_{n})<\pi R_2^2.
	\]
	The condition $R_2^2<2R_1^2$ corresponds to the above condition that $R_0<2$ and therefore, these $n$ Reeb orbits cannot be iterates of one another.
The lower bound on the period of periodic Reeb orbits on $\Sigma$ given by Lemma \ref{lemma:action} above shows that they can also not be iterates of a short orbit.
	Thus we have $n$ simple periodic Reeb orbit.
\end{proof}


%
\bibliographystyle{amsalpha}
\bibliography{bibtex_paper_list}

\end{document}